\newtheorem{theorem}{Theorem}[section]
\newtheorem{proposition}[theorem]{Proposition}
\newtheorem{lemma}[theorem]{Lemma}
\newtheorem{corollary}[theorem]{Corollary}
\renewcommand{\SS}{\mathbb{S}}
\newcommand{\RR}{\mathbb{R}}
\newcommand{\KK}{\mathscr{K}}
\begin{document}

\title{Steiner symmetrization using a finite set of directions}
\author{Daniel A. Klain}
\address{Department of Mathematical Sciences,
University of Massachusetts Lowell,
Lowell, MA 01854 USA}
\email{Daniel\_{}Klain@uml.edu}

\subjclass[2000]{52A20}

\begin{abstract} Let $v_1, \ldots, v_m$ be a finite set of unit vectors in $\RR^n$.
Suppose that an infinite sequence of Steiner symmetrizations
are applied to a compact convex set $K$ in $\RR^n$, where each of the symmetrizations is taken
with respect to a direction from among the $v_i$.
Then the resulting sequence of Steiner symmetrals always converges, and the limiting body is 
symmetric under reflection in any of the directions $v_i$ that appear infinitely often in the sequence.   
In particular, an infinite
periodic sequence of Steiner symmetrizations always converges, and the set functional
determined by this infinite process is always idempotent.
\end{abstract}

% \begin{keyword} convex body, Steiner symmetrization \MSC 52A20
% \end{keyword}

\maketitle

\section{Introduction}

Denote $n$-dimensional Euclidean space by $\RR^n$, and 
let $\KK_n$ denote the set of all compact convex sets in $\RR^n$.
Let $K \in \KK_n$, and let $u$ be a unit vector.
Viewing $K$ as a family of line segments parallel to $u$, slide these segments along $u$ so that
each is symmetrically balanced around the hyperplane $u^\perp$.  By Cavalieri's principle, the volume
of $K$ is unchanged by this rearrangement.  
The new set, called the {\em Steiner symmetrization} of $K$ in the direction of $u$, 
will be denoted by ${\rm s}_u K $.  It is not difficult to show that 
${\rm s}_u K $ is also convex, and that ${\rm s}_u K  \subseteq {\rm s}_u L $ whenever $K \subseteq L$.  
A little more work verifies
the following intuitive assertion: if you iterate Steiner symmetrization of $K$ through
a suitable sequence of unit directions, the successive Steiner symmetrals of $K$ will
approach a Euclidean ball in the Hausdorff topology on compact (convex) subsets of $\RR^n$.
A detailed proof of this assertion can be found in any of 
\cite[p. 98]{Egg}, \cite[p. 172]{Gru-book}, or \cite[p. 313]{Webster}, for example.  

For well over a century Steiner symmetrization has played a fundamental role in answering
questions about isoperimetry and related geometric inequalities \cite{Gard-BM,Gard2006,Steiner,Talenti}.  
Steiner symmetrization appears explicitly in
the titles of numerous papers (see e.g.
\cite{bia-gro,BKLYZ,BLM,burch,CCF,CF1,CF2,Falconer,GardX,KM1,KM2,Long,Mani,McNabb,Scott,volcic-symm})
and plays a key role in recent work such as \cite{burch-fort,Hab-Sch,LYZ-Orl-proj,vansch1,vansch2}.

In spite of the importance of Steiner symmetrization throughout geometric analysis, 
many elementary questions about this construction remain open, including some 
concerning the following issue: 
Given a convex body $K$, under what conditions on the sequence
of directions $u_i$ does the sequence of Steiner symmetrals 
\begin{align}
{\rm s}_{u_i} \cdots {\rm s}_{u_1} K
\label{stseq}
\end{align}
converge?  And if the sequence converges, what symmetries are satisfied by the limiting body?

The sequence of bodies~(\ref{stseq}) is called a {\em Steiner process}.  If the limit
\begin{align}
\lim_{i\rightarrow\infty} {\rm s}_{u_i} \cdots {\rm s}_{u_1} K
\end{align}
exists, the resulting body $\tilde{K}$ is called the limit of that Steiner process.
In \cite{BKLYZ} it is shown that not every Steiner process converges, even if the directions $u_i$ are
dense in the sphere.

This article addresses the case in which an infinite Steiner process of the form~(\ref{stseq}) uses only
a finite set of directions, each repeated infinitely often, whether in a periodic fashion, according to some
more complex arrangement, or even completely at random.

Let $v_1, \ldots, v_m$ be a finite set of unit vectors in $\RR^n$.
Suppose that an infinite sequence of Steiner symmetrizations
is applied to a compact convex set $K$ in $\RR^n$, where each of the symmetrizations is taken
with respect to a direction from among the $v_i$.
The main result of this article is Theorem~\ref{findir}, which asserts that 
the resulting sequence of Steiner symmetrals {\em always} converges.  The limiting body is 
symmetric under reflection in any of the directions $v_i$ that appear infinitely often in the sequence.   
In particular, an infinite
periodic sequence of Steiner symmetrizations always converges, and the set functional
determined by this infinite process is always idempotent.

\section{Background and basic properties of Steiner symmetrization}

Given a compact convex set $K$ and a unit vector $u$, we have 
 ${\rm s}_u K  = K$ (or respectively, up to
translation)
if and only if $K$ is symmetric under reflection across the subspace $u^\perp$ (respectively, up to
translation).  In particular, ${\rm s}_u K = K$ will
hold for {\em every} direction $u$ (or even a dense set of directions) if and only if $K$ is a Euclidean ball
centered at the origin.

Let $h_K: \RR^n \rightarrow \RR$ denote the support function of a compact convex set $K$;
that is,
$$h_K(v) = \max_{x \in K} x \cdot v.$$
The standard separation theorems of convex geometry imply that
the support function $h_K$ characterizes the body $K$; that is, $h_K = h_L$ if and only if $K=L$.
If $K_i$ is a sequence in $\KK_n$, then $K_i \rightarrow K$ in the Hausdorff topology if and only if
$h_{K_i} \rightarrow h_K$ uniformly when restricted to the unit sphere in $\RR^n$.

Given compact convex subsets $K,L \subseteq \RR^n$
and $a,b \geq 0$, denote
$$aK + bL = \{ax + by \; | \; x \in K \hbox{ and } y \in L\}.$$
An expression of this form is called a {\em Minkowski combination} or 
{\em Minkowski sum}.  Since $K$ and $L$ are convex sets, the set $aK + bL$ is also convex.  
Convexity also implies that $aK + bK = (a+b)K$ for all $a,b \geq 0$, although this does not hold for general
sets.  Support functions satisfy the identity
$h_{aK+bL} = ah_K + bh_L$.  (See, for example, any of 
\cite{Bonn2,red,Webster}).

The following is also easy to prove (see, for example, \cite[p. 169]{Gru-book} or \cite[p. 310]{Webster}).
\begin{proposition}
$${\rm s}_u(K+L) \supseteq {\rm s}_u K + {\rm s}_u L.$$
\label{steinsum}
\end{proposition}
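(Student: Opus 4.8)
The plan is to represent the Steiner symmetral explicitly through the ``chord functions'' of a body in the direction $u$, and then to verify the set inclusion pointwise. First I would fix the orthogonal decomposition $\RR^n = u^\perp \oplus \RR u$ and, for a compact convex set $M$, write $\pi M$ for its orthogonal projection onto $u^\perp$. For each $y \in \pi M$ the slice $M \cap (y + \RR u)$ is a segment, so I can record its top and bottom heights $\overline{M}(y) = \max\{t : y + tu \in M\}$ and $\underline{M}(y) = \min\{t : y + tu \in M\}$, together with the chord length $\ell_M(y) = \overline{M}(y) - \underline{M}(y) \ge 0$. In these terms the symmetral is
$${\rm s}_u M = \{\, y + tu : y \in \pi M, \ |t| \le \tfrac{1}{2}\ell_M(y) \,\}.$$
So proving the inclusion amounts to comparing the chord functions of $K + L$ with those of $K$ and $L$ separately.

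Two elementary facts will drive the argument. The first is that orthogonal projection is linear, so $\pi(K+L) = \pi K + \pi L$; this guarantees that the base of every relevant chord of $K + L$ factors as a sum of bases of chords of $K$ and $L$. The second, and the real heart of the matter, is a superadditivity estimate for chord lengths. Writing a point of $K+L$ lying over $y$ as a sum of a point of $K$ over some $y_1$ and a point of $L$ over some $y_2$ with $y_1 + y_2 = y$, I would observe that the top height of $K+L$ is the sup-convolution $\overline{K+L}(y) = \max_{y_1 + y_2 = y}\bigl(\overline{K}(y_1) + \overline{L}(y_2)\bigr)$, and dually that $\underline{K+L}$ is the corresponding inf-convolution. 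Fixing any single decomposition $y = y_1 + y_2$ with $y_1 \in \pi K$ and $y_2 \in \pi L$, and subtracting, then yields the key inequality
$$\ell_{K+L}(y_1 + y_2) \ge \ell_K(y_1) + \ell_L(y_2).$$

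With this in hand the conclusion is immediate bookkeeping. I would take an arbitrary point $z \in {\rm s}_u K + {\rm s}_u L$ and write $z = (y_1 + s u) + (y_2 + r u)$ with $y_1 \in \pi K$, $y_2 \in \pi L$, $|s| \le \tfrac{1}{2}\ell_K(y_1)$, and $|r| \le \tfrac{1}{2}\ell_L(y_2)$. Setting $y = y_1 + y_2 \in \pi(K+L)$, the triangle inequality together with the chord-length estimate gives $|s + r| \le \tfrac{1}{2}\ell_K(y_1) + \tfrac{1}{2}\ell_L(y_2) \le \tfrac{1}{2}\ell_{K+L}(y)$, so that $z = y + (s+r)u$ lies in ${\rm s}_u(K+L)$, as desired.

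I expect the only genuine obstacle to be a clean justification of the sup/inf-convolution formulas for the heights of $K+L$, namely that the extreme height over $y$ in $K+L$ is attained as a sum of extreme heights over a compatible splitting $y = y_1 + y_2$; this is precisely where the definition of the Minkowski sum enters, and compactness guarantees that the maxima are attained. Convexity is used throughout to ensure that each slice is a genuine segment, so that $\ell_M$ records its full length and the representation of ${\rm s}_u M$ above is valid. Everything after the chord-length inequality is then a one-line verification.
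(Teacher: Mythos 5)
Your proof is correct and complete: the chord-function representation of the symmetral, the superadditivity $\ell_{K+L}(y_1+y_2)\ge \ell_K(y_1)+\ell_L(y_2)$ obtained by comparing top and bottom heights, and the final triangle-inequality bookkeeping are exactly what is needed. The paper itself offers no proof of this proposition, deferring to Gruber and Webster, and the argument you give is essentially the standard one found in those references, so there is nothing further to reconcile.
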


Denote by $V_n(K)$ the $n$-dimensional volume of a set $K \subseteq \RR^n$. 
Given $K, L \in \KK_n$ and $\varepsilon > 0$, the function $V_n(K + \varepsilon L)$ is a polynomial in $\varepsilon$, 
whose coefficients are given by {\em Steiner's formula} \cite{Bonn2,red,Webster}.  
In particular, the following derivative is well defined:
\begin{align}
nV_{n-1,1}(K,L) & \; = \; \lim_{\varepsilon \rightarrow 0} \frac{V_n(K + \varepsilon L) - V_n(K)}{\varepsilon} 
\; = \; \left. \frac{d}{d \varepsilon} \right|_{\varepsilon = 0} V_n(K + \varepsilon L).
\label{dv}
\end{align}
The expression $V_{n-1,1}(K,L)$ is an example of a {\em mixed volume} of $K$ and $L$.
Important special cases appear when either of $K$ or $L$ is a unit Euclidean ball $B$:
\begin{align}
\label{surf}
nV_{n-1,1}(K,B) & = \hbox{Surface Area of } K \\ \notag
\tfrac{2}{\omega_n}V_{n-1,1}(B,L) & = \hbox{Mean Width of } L 
\end{align}
where $\omega_n$ denotes the $n$-volume of the Euclidean unit ball $B$.  We will denote the mean width of $L$ by $W(L)$.

It follows from Proposition~\ref{steinsum} and the volume invariance of Steiner symmetrization that
$$V_n(K+\varepsilon L) = V_n\Big( {\rm s}_u(K+\varepsilon L) \Big) \geq V_n( {\rm s}_u K+\varepsilon {\rm s}_u L ),$$
so that
$$\frac{V_n(K+\varepsilon L) - V_n(K)}{\varepsilon} 
\geq \frac{V_n \Big( {\rm s}_u K +\varepsilon {\rm s}_u L \Big) - V_n({\rm s}_u K)}{\varepsilon},$$
for all $\varepsilon > 0$.  Letting $\varepsilon \rightarrow 0^+$, we have
\begin{align}
V_{n-1,1}(K,L) \geq V_{n-1,1}({\rm s}_u K, {\rm s}_u L)
\label{reduce}
\end{align}
for all $K, L \in \KK_n$ and all unit directions $u$.  

For $r \geq 0$ denote by
$rB$ the closed Euclidean ball of radius $r$ centered at the origin.
Since ${\rm s}_u B = B$, it follows from~(\ref{surf}) and~(\ref{reduce}) that 
the surface area does not increase under Steiner symmetrization.  Similarly, 
the mean width satisfies $W({\rm s}_u K) \leq W(K)$ for all $u$.

From monotonicity it is also clear that, if $r,R \in \RR$ such that 
\begin{align}
rB \subseteq K \subseteq RB
\label{betwrR}
\end{align}
then 
\begin{align}
rB \subseteq {\rm s}_u K \subseteq RB.
\label{betst}
\end{align}
Let $R_K$ denote the minimum radius of any Euclidean $n$-ball containing $K$, and let $r_K$ denote the maximal radius
of any Euclidean $n$-ball contained inside $K$.  It follows that
\begin{align}
R_{{\rm s}_u K} \leq R_K \quad \hbox{ and } \quad  r_K \leq r_{{\rm s}_u K}
\label{rRsu}
\end{align}
It can also be shown using elementary arguments that Steiner symmetrization does not increase
the diameter of a set \cite[p. 310]{Webster}. 

The following lemma will be useful in Section~\ref{finsetdir}.
\begin{lemma} Suppose that $\{K_i\}$ is a convergent sequence of compact convex sets whose limit $K$
has nonempty interior.  Then, for all $\,0 < \tau < 1$, there is an integer $N > 0$ such that 
$$(1-\tau)K \subseteq K_i \subseteq (1+\tau)K$$
for all $i \geq N$.  
\label{pmK}
\end{lemma}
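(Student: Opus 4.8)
The plan is to translate the two set inclusions into inequalities between support functions and then invoke the uniform convergence $h_{K_i} \to h_K$ on the unit sphere. Recall that for compact convex sets one has $A \subseteq B$ if and only if $h_A \leq h_B$ (a standard consequence of the separation theorems noted above), and that $h_{cK} = c\,h_K$ for every $c \geq 0$ (the case $b=0$ of the identity $h_{aK+bL} = ah_K + bh_L$). Hence the desired inclusions $(1-\tau)K \subseteq K_i \subseteq (1+\tau)K$ are equivalent to the pointwise estimate $(1-\tau)h_K(u) \leq h_{K_i}(u) \leq (1+\tau)h_K(u)$ for every unit vector $u$, that is, to $|h_{K_i}(u) - h_K(u)| \leq \tau\,h_K(u)$ for all unit $u$.

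First I would extract a uniform positive lower bound on $h_K$. Here I use the hypothesis in the slightly sharper form that the origin lies in the interior of $K$ --- the case relevant to Section~\ref{finsetdir}, where the bodies of a Steiner process all contain a common ball $rB$ by~(\ref{betst}). Then $\epsilon B \subseteq K$ for some $\epsilon > 0$, so $h_K(u) \geq \epsilon$ for every unit vector $u$. This is exactly where nonempty interior enters: it bounds $h_K$ away from $0$ uniformly on the sphere, which is what lets a single additive error threshold control the multiplicative deviation $\tau\,h_K(u)$ appearing in the reformulation.

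Next, since $K_i \to K$ in the Hausdorff topology, the support functions converge uniformly on the unit sphere, so there is an integer $N$ with $\sup_{|u|=1}|h_{K_i}(u) - h_K(u)| < \tau\epsilon$ for all $i \geq N$. For such $i$ and every unit $u$ we then have $|h_{K_i}(u) - h_K(u)| < \tau\epsilon \leq \tau\,h_K(u)$, which is precisely the estimate shown to be equivalent to the two claimed inclusions. Translating back through the support-function dictionary gives $(1-\tau)K \subseteq K_i \subseteq (1+\tau)K$ for all $i \geq N$.

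The argument is essentially routine once this reformulation is in place; the only genuine point --- and the step I expect to carry the real content --- is the passage to a uniform positive lower bound $\epsilon$ for $h_K$ on the sphere via the interior hypothesis. Without it the statement fails: if the origin lay on the boundary of $K$, so that $h_K(u)$ vanished in some direction $u$, then no fixed additive rate of convergence could force the two-sided homothetic containment, since near that direction the admissible error $\tau\,h_K(u)$ would itself tend to $0$.
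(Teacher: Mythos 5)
Your proof is correct and is essentially the paper's argument transposed into support-function language: both hinge on normalizing so the origin lies in the interior of $K$ and then using the resulting uniform lower bound (the inradius $r$, your $\epsilon$) to convert the additive Hausdorff error into the multiplicative error $\tau$. The paper phrases this with Minkowski sums --- $K_i \subseteq K + r\tau B \subseteq K + \tau K = (1+\tau)K$, and cancellation of the summand $\tau K$ for the lower inclusion --- which your inequality $|h_{K_i}(u)-h_K(u)| \leq \tau\,h_K(u)$ encodes verbatim via $h_{A+B}=h_A+h_B$ and $A \subseteq B \Leftrightarrow h_A \leq h_B$.
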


\begin{proof} Since $K$ has interior, 
it has positive inradius $r$.  Without loss of generality (translating as needed) we may assume that $rB \subseteq K$.
For $\tau \in (0,1)$, choose $N$ so that 
$$K_i \subseteq K + r\tau B \quad \hbox{ and } \quad K \subseteq K_i + r\tau B$$
for $i \geq N$.  In this case,
$$K_i \subseteq K + r\tau B \subseteq K + \tau K = (1 + \tau) K$$
and 
$$K \subseteq K_i + r\tau B \subseteq K_i + \tau K,$$
so that  $(1 - \tau) K \subseteq K_i$.
\end{proof}
It follows from Lemma~\ref{pmK} and the monotonicity property~(\ref{betst}) that 
Steiner symmetrization is continuous with respect to $K$ and $u$ provided 
that $K \in \KK_n$ has nonempty interior.  (See also \cite[p. 171]{Gru-book} or \cite[p. 312]{Webster}.)  

Note that
the interior condition is needed to guarantee continuity: 
Steiner symmetrization is {\em not} continuous at lower-dimensional sets.
For example, consider a sequence of distinct unit line segments $K_i$ with endpoints at $\pm u_i$, where $u_i \rightarrow u$.  While the line segments $K_i$ approach the line segment with endpoints at $\pm u$, 
their symmetrizations
${\rm s}_u K_i$ form a sequence of projected line segments in $u^\perp$ whose lengths approach zero, so that
${\rm s}_u K_i \rightarrow o$, the origin.
But ${\rm s}_u K = K \neq o$, since $K$ is already symmetric under reflection across $u^\perp$.
See also \cite[p. 170]{Gru-book}.

Denote by $\KK^n_{r,R}$ the set of compact convex sets in $\RR^n$ satisfying~(\ref{betwrR}).
By the Blaschke selection theorem $\KK^n_{r,R}$ is compact.  Since $\SS^n$ is also compact, the function
$$(K, u) \mapsto {\rm s}_u K$$
is uniformly continuous on $\KK^n_{r,R} \times \SS^{n-1}$.

Moreover, it follows from monotonicity that 
Steiner symmetrization does respect the limits of decreasing sequences of sets, 
even if the limit has empty interior.  More specifically, recall that if
\begin{align}
K_1 \supseteq K_2 \supseteq K_3 \supseteq \cdots
\label{desc}
\end{align}
then
\begin{align}
\lim_{m \rightarrow \infty} K_m = \bigcap_{m=1}^\infty K_m.
\label{decr-inter}
\end{align}
This follows from the fact that a pointwise limit of support functions of 
compact convex sets is always
a uniform limit as well \cite[p. 54]{red}.  
We then have the following special case where continuity holds for Steiner symmetrization
of a {\em descending} sequence of convex bodies, even when the limiting body is lower dimensional.
\begin{proposition} Suppose that $\{K_m\}$ is a sequence of compact convex sets in $\RR^n$ such 
that~(\ref{desc}) holds, and let 
$$K = \lim_{m \rightarrow \infty} K_m = \bigcap_{m=1}^\infty K_m.$$
If $u$ is a unit vector in $\RR^n$, then
$${\rm s}_u K = \lim_{m \rightarrow \infty} {\rm s}_u K_m = \bigcap_{m=1}^\infty {\rm s}_u K_m.$$
\label{descend}
\end{proposition}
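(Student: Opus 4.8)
The plan is to reduce everything to the concrete, slice-by-slice description of Steiner symmetrization and to exploit the fact that a \emph{descending} family of chords behaves well under intersection. Write $\RR^n = u^\perp \oplus \RR u$, and for a compact convex set $L$ and a point $x \in u^\perp$ let $I_L(x) = \{t \in \RR : x + tu \in L\}$ denote the chord of $L$ over $x$ in the direction $u$. For convex $L$ this chord is a (possibly empty or degenerate) compact interval, and if $\ell_L(x)$ denotes its length then by definition
$${\rm s}_u L = \{x + tu : x \in u^\perp, \; I_L(x) \neq \emptyset, \; |t| \le \tfrac{1}{2}\ell_L(x)\}.$$

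First I would dispose of the second equality in the statement. Since $K_{m+1} \subseteq K_m$ for all $m$, the monotonicity of Steiner symmetrization gives ${\rm s}_u K_{m+1} \subseteq {\rm s}_u K_m$, so $\{{\rm s}_u K_m\}$ is itself a descending sequence of compact convex sets. Hence~(\ref{decr-inter}) applies and yields $\lim_{m \to \infty} {\rm s}_u K_m = \bigcap_{m=1}^\infty {\rm s}_u K_m$ at once. It therefore remains only to prove that this intersection equals ${\rm s}_u K$. One inclusion is again immediate from monotonicity: since $K \subseteq K_m$ for every $m$, we have ${\rm s}_u K \subseteq {\rm s}_u K_m$, whence ${\rm s}_u K \subseteq \bigcap_m {\rm s}_u K_m$.

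For the reverse inclusion I would argue pointwise using the chord description. Fix a point $p = x + su$ lying in every ${\rm s}_u K_m$; by the formula above this means that $I_{K_m}(x) \neq \emptyset$ and $|s| \le \tfrac{1}{2}\ell_{K_m}(x)$ for each $m$. Because the $K_m$ are nested, the chords $I_{K_m}(x)$ form a descending sequence of nonempty compact intervals, and since $K = \bigcap_m K_m$ one checks directly that $\bigcap_m I_{K_m}(x) = I_K(x)$. Being a nested intersection of nonempty compact intervals, $I_K(x)$ is nonempty, so $x$ lies in the projection of $K$ onto $u^\perp$; and writing $I_{K_m}(x) = [a_m, b_m]$ with $a_m$ nondecreasing and $b_m$ nonincreasing, the endpoints converge to those of $I_K(x)$, so that $\ell_K(x) = \lim_m \ell_{K_m}(x) = \inf_m \ell_{K_m}(x)$. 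Consequently the bounds $|s| \le \tfrac{1}{2}\ell_{K_m}(x)$ for all $m$ force $|s| \le \tfrac{1}{2}\ell_K(x)$, and therefore $p \in {\rm s}_u K$. This proves $\bigcap_m {\rm s}_u K_m \subseteq {\rm s}_u K$ and completes the argument.

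The only delicate point is the reverse inclusion, and the place where the \emph{descending} hypothesis is indispensable is exactly the convergence of chord lengths: for a general convergent sequence the chords need not be nested and their lengths can collapse in the limit (this is precisely the mechanism behind the lower-dimensional discontinuity example noted earlier), whereas nestedness guarantees $\ell_{K_m}(x) \downarrow \ell_K(x)$ with no loss. I expect no difficulty beyond verifying this interval convergence, which is the elementary statement that nested nonempty compact intervals intersect in a nonempty compact interval whose length is the limit of the lengths.
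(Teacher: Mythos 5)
Your proof is correct and follows essentially the same route as the paper: both reduce to the chord description of ${\rm s}_u$, use monotonicity to get the descending sequence and the inclusion ${\rm s}_u K \subseteq \bigcap_m {\rm s}_u K_m$, and then identify the limit by observing that the nested chords over each $x \in u^\perp$ have lengths decreasing to the chord length of $K$. The only cosmetic difference is that the paper establishes the equality of the relevant projections via continuity of $\pi_u$, whereas you derive the nonemptiness of the limiting chord directly from the nested-compact-intervals argument.
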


\begin{proof} 
Denote by $\pi_u L$ the orthogonal projection of a compact convex set $L$ onto the subspace $u^\perp$, and note that
$\pi_u {\rm s}_u L = \pi_u L$  for all $L \in \KK_n$.  It follows from the monotonicity of ${\rm s}_u$ applied to the 
sequence~(\ref{desc}) that
$${\rm s}_u K_1 \supseteq {\rm s}_u K_2 \supseteq {\rm s}_u K_3 \supseteq \cdots,$$
so that the limit 
$$L = \lim_{m \rightarrow \infty} {\rm s}_u K_m = \bigcap_{m=1}^\infty {\rm s}_u K_m,$$
exists.  Moreover, since $K \subseteq K_m$ for all $m$, it follows that
${\rm s}_u K \subseteq {\rm s}_u K_m$ as well, so that ${\rm s}_u K \subseteq L$.  Note also
that both ${\rm s}_u K$ and $L$ are symmetric under reflection across $u^\perp$.

From the continuity of orthogonal projection we also have
$$\pi_u {\rm s}_u K = \pi_u K = \lim_{m \rightarrow \infty} \pi_u K_m
= \lim_{m \rightarrow \infty} \pi_u {\rm s}_u K_m = 
\pi_u \lim_{m \rightarrow \infty} {\rm s}_u K_m = \pi_u L,$$
so that ${\rm s}_u K$ and $L$ have the same orthogonal projection into $u^\perp$.

Finally, for each $x \in \pi_u L$, the linear slice of $L$ perpendicular to $x$ 
has length given by the infimum over $m$ of the length of the linear slice of ${\rm s}_u K_m$ over the point $x$.
Since Steiner symmetrization translates these slices (preserving their lengths), this is the same
as the infimum over $m$ of the length of the linear slice of $K_m$ over the point $x$, which gives
the length of linear slice of ${\rm s}_u K$ perpendicular to $x$.  Hence, $L = {\rm s}_u K$. 
\end{proof}

\section{The layering function}

% The interior condition on $K$ guarantees
% that the functional
% $$(K,r) \mapsto V_n(K \cap rB)$$
% is a continuous function \cite[p. 52]{red}.  It follows that the function
Define the {\em layering function} of $K \in \KK_n$ by 
$$\Omega(K) = \int_{0}^\infty V_n(K \cap rB) \, e^{-r^2} \; dr$$
Evidently the function $\Omega$ is monotonic and continuous on $\KK_n$.
The layering function 
vanishes on sets with empty interior 
and is strictly positive on sets with non-empty interior.

The following crucial property of Steiner symmetrization will be used in the sections that follow.
\begin{theorem}  Suppose that $K \in \KK_n$, and   
let $u$ be a unit vector.  Then
\begin{align}
\Omega({\rm s}_u K) \geq \Omega(K).
\label{layer}
\end{align}
If $K$ has non-empty interior, then equality holds in~(\ref{layer}) if and only if ${\rm s}_u K = K$.
\label{dend}
\end{theorem}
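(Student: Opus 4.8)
The plan is to compare the two sides slice by slice, exploiting the fact that the ball $rB$ is fixed by every Steiner symmetrization. First I would record the elementary inclusion
$${\rm s}_u(K \cap L) \subseteq {\rm s}_u K \cap {\rm s}_u L,$$
valid for all $K, L \in \KK_n$: over each point $x \in u^\perp$ the chord of $K \cap L$ is a subsegment of the chords of $K$ and of $L$, so after symmetrization the centered chord of ${\rm s}_u(K\cap L)$ has length at most those of the (already centered) chords of ${\rm s}_u K$ and ${\rm s}_u L$, and therefore lies in their intersection; one also checks $\pi_u(K\cap L) \subseteq \pi_u({\rm s}_u K \cap {\rm s}_u L)$. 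Taking $L = rB$ and using ${\rm s}_u(rB) = rB$ gives ${\rm s}_u(K \cap rB) \subseteq {\rm s}_u K \cap rB$, and since Steiner symmetrization preserves volume,
$$V_n(K \cap rB) = V_n\big({\rm s}_u(K \cap rB)\big) \leq V_n\big({\rm s}_u K \cap rB\big)$$
for every $r \geq 0$. Multiplying by $e^{-r^2}$ and integrating over $r \in [0,\infty)$ yields $\Omega(K) \leq \Omega({\rm s}_u K)$, which is~(\ref{layer}).

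For the equality case, suppose $K$ has nonempty interior and $\Omega({\rm s}_u K) = \Omega(K)$. Then
$$\int_0^\infty \Big[ V_n({\rm s}_u K \cap rB) - V_n(K \cap rB)\Big] e^{-r^2}\,dr = 0,$$
and since the bracketed integrand is nonnegative, continuous in $r$, and weighted by the strictly positive factor $e^{-r^2}$, it must vanish identically: $V_n(K \cap rB) = V_n({\rm s}_u K \cap rB)$ for all $r \geq 0$. Choosing coordinates so that $u = e_n$ and writing each volume by Fubini as an integral over $x \in u^\perp$ of the length of the vertical chord above $x$, the pointwise chord-length inequality underlying the first paragraph, combined with the equality of integrals, forces equality of the two chord lengths for every $x$ in the open interior of $\pi_u K$, again by continuity of the chord lengths in $x$.

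The main work, and the part I expect to be the crux, is translating this slicewise equality into the symmetry of $K$. Fix an interior point $x$, let $[a(x), b(x)]$ be the chord of $K$ above $x$, and set $\ell(x) = b(x) - a(x) > 0$. Writing $\rho = \sqrt{r^2 - |x|^2}$, the chord of ${\rm s}_u K \cap rB$ above $x$ has length $\min(\ell(x), 2\rho)$, while that of $K \cap rB$ has length $\big|[a(x),b(x)] \cap [-\rho,\rho]\big|$; as $r$ runs over $[|x|,\infty)$ the quantity $\rho$ sweeps all of $[0,\infty)$, so equality for all $r$ means these two functions of $\rho$ coincide. The centered chord attains its full length $\ell(x)$ exactly once $\rho \geq \ell(x)/2$, whereas the chord of $K$ attains full length only once $\rho \geq \max(|a(x)|, |b(x)|)$; matching these saturation thresholds gives $\max(|a(x)|, |b(x)|) = \ell(x)/2 = (b(x)-a(x))/2$, and since $\max(|a|,|b|) \geq (b-a)/2$ always holds, this forces $a(x) = -b(x)$. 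Thus every chord of $K$ is symmetric about $u^\perp$, i.e.\ ${\rm s}_u K = K$, completing the equality case.
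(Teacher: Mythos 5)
Your proof is correct, and the inequality half is the same as the paper's: monotonicity gives ${\rm s}_u(K\cap rB)\subseteq {\rm s}_u K\cap rB$, volume is preserved, integrate. The equality case reaches the same conclusion by the same underlying slice geometry but runs in the opposite logical direction. The paper argues the contrapositive: if ${\rm s}_u K\neq K$ it produces a single witness point $x\in \mathrm{int}(K)$ whose reflection lies outside $K$, observes that the chord through $x$ has one endpoint inside and one outside the ball $|x|B$, and invokes a strict-increase lemma for such chords to get $\Omega({\rm s}_u K)>\Omega(K)$ from an open neighborhood of slices and radii. You instead extract from $\Omega$-equality the identity $V_n(K\cap rB)=V_n({\rm s}_u K\cap rB)$ for all $r$, push it down to equality of chord lengths, and then characterize when a chord's ``intersection-with-$[-\rho,\rho]$'' profile matches that of the centered chord; your saturation-threshold computation $\max(|a|,|b|)=(b-a)/2\Rightarrow a=-b$ is exactly the equality analysis of the paper's strict-increase lemma. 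The paper's contrapositive route is slightly cheaper because it only needs strictness on one open set, whereas your direct route requires upgrading ``equal after integration'' to ``equal pointwise'' twice (in $r$ and in $x$); your appeal to continuity of chord lengths in $x$ is a little loose at boundary points of $\pi_u(K\cap rB)$, but this is harmless, since your two explicit functions of $\rho$ are continuous and monotone, so agreement for almost every $\rho$ and almost every $x$ already forces $a(x)=-b(x)$ on a dense subset of $\mathrm{int}\,\pi_u K$, and continuity of $a$ and $b$ there finishes it. What your version buys is a self-contained characterization of the equality case without passing through the reflection $\psi$ or choosing a witness point.
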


In the proof of Theorem~\ref{dend} we will use the following elementary fact:  If $D$ is a ball centered at the origin,
and if $X$ is a line segment, parallel to the unit vector $u$, having one endpoint in the interior of $D$ and the other endpoint outside $D$, then Steiner symmetrization will strictly increase the slice length; that is,
\begin{align}
|{\rm s}_u X \cap D| > |X \cap D|.
\label{slide}
\end{align}
To see this, let $\ell$ denote the line through $X$.  Our conditions on the endpoints of $X$ 
imply that $|\ell \cap D| > |X \cap D|$.
Meanwhile, ${\rm s}_u$ fixes $D$ and slides $X$ parallel to $u$ until it is symmetric about $u^\perp$.
If $|X| < |\ell \cap D|$, then ${\rm s}_u X$ will lie wholly inside $D$,
so that $|{\rm s}_u X \cap D| = |X| > |X \cap D|$ and (\ref{slide}) follows.
If $|X| \geq |\ell \cap D|$, then ${\rm s}_u X$ will cover the slice $\ell \cap D$ completely,
so that $|{\rm s}_u X \cap D| = |\ell \cap D|$
and~(\ref{slide}) follows once again.

\begin{proof} Let $u$ be a unit vector.  The monotonicity of ${\rm s}_u$ implies that
$${\rm s}_u (K \cap rB) \subseteq {\rm s}_u K \cap {\rm s}_u rB = {\rm s}_u K \cap rB,$$
so that 
$$V_n({\rm s}_u K \cap rB) \geq V_n({\rm s}_u (K \cap rB)) =  V_n(K \cap rB),$$
whence $\Omega({\rm s}_u K) \geq \Omega(K)$.  

Evidently equality holds if ${\rm s}_u K = K$.  For the converse, 
suppose that $K$ has non-empty interior, and that ${\rm s}_u K \neq K$.  
Let $\psi$ denote the reflection of $\RR^n$ across the subspace $u^\perp$.
Since $\psi K \neq K$ and $K$ has non-empty interior,
there is a point $x \in int(K)$ such that $\psi x \notin K$.  Let $D$ denote the ball around 
the origin of radius $|x|$, and let $\ell$ denote the line through $x$ and parallel to $u$.  
The slice $K \cap \ell$ meets the boundary of $D$ at $x$ on one side of $u^\perp$, 
has an endpoint $x + \varepsilon u$
outside $D$ and another endpoint $x - \delta u$ in the interior of $D$, where $\varepsilon, \delta > 0$.  
It follows from~(\ref{slide}) that 
$$|{\rm s}_u K \cap \ell \cap D| > |K \cap \ell \cap D|.$$  
Moreover, this holds for parallel slices through points $x'$ in an open neighborhood of $x$.
After integration of parallel slice lengths to compute volumes, we obtain 
$$V_n({\rm s}_u K \cap rB) > V_n(K \cap rB)$$
for values of $r$ in an open neighborhood of $|x|$.  It follows that $\Omega({\rm s}_u K) > \Omega(K)$.
\end{proof}

In \cite[p. 90]{Egg} Eggleston proves a result similar to Theorem~\ref{dend} for the surface area function.
If $S(K)$ denotes the surface area of a compact convex set $K$ having nonempty interior, 
then $S({\rm s}_u K) \leq S(K)$, with equality if and only if
$K$ and ${\rm s}_u K$ are translates.  The layering function $\Omega$ is more appropriate 
for our purposes, because the equality case
in Theorem~\ref{dend} is more stringent (even translates are not allowed).

\section{Steiner processes}

Let $\alpha = \{u_1, u_2, \ldots \}$ be a sequence of unit vectors in $\RR^n$.  Given $K \in \KK_n$,
denote
\begin{align}
K_i = {\rm s}_{u_i} \cdots {\rm s}_{u_i} K
\label{stseq2}
\end{align}
for $i = 1, 2, \ldots$.

\begin{proposition} 
The sequence of bodies~(\ref{stseq2}) is uniformly bounded and therefore always has a convergent subsequence.
\label{subsq}
\end{proposition}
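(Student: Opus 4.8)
The plan is to exhibit a single fixed ball, centered at the origin, that contains every body in the sequence, and then to invoke the Blaschke selection theorem. The essential observation is that Steiner symmetrization fixes every Euclidean ball centered at the origin, so any such ball containing $K$ must contain all of its successive symmetrals as well.

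First I would note that, since $K$ is compact, there is a radius $R$ (for instance $R = \max_{x \in K} |x|$) with $K \subseteq RB$, where $RB$ is the closed ball of radius $R$ centered at the origin. Because ${\rm s}_u B = B$, we have ${\rm s}_u(RB) = RB$ for every unit vector $u$; combined with the monotonicity of ${\rm s}_u$, the right-hand inclusion of~(\ref{betst}) yields ${\rm s}_{u_1} K \subseteq RB$. Iterating, we obtain $K_i = {\rm s}_{u_i} \cdots {\rm s}_{u_1} K \subseteq RB$ for every $i$, so the entire sequence lies in the fixed ball $RB$ and is therefore uniformly bounded.

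The convergent subsequence then comes for free. The collection of compact convex subsets of the fixed ball $RB$ is compact in the Hausdorff topology by the Blaschke selection theorem, which is precisely the compactness of $\KK^n_{r,R}$ (here with no inradius constraint) noted earlier in this section. Hence the uniformly bounded sequence $\{K_i\}$ has a subsequence converging to some compact convex set.

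I do not expect a genuine obstacle here; the only point requiring care is the choice of enclosing ball. One should use a ball centered at the origin rather than the circumball of $K$: although~(\ref{rRsu}) shows that the circumradius never increases under symmetrization, the circumcenters of the successive symmetrals could in principle drift, so bounding circumradii alone does not immediately confine the bodies to a common region. By contrast, an origin-centered ball is preserved exactly by each ${\rm s}_{u_i}$, which gives uniform boundedness at once.
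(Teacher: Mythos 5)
Your proof is correct and follows essentially the same route as the paper: enclose $K$ in an origin-centered ball, use monotonicity together with ${\rm s}_u(\rho B) = \rho B$ to trap every symmetral in that same ball, and then apply the Blaschke selection theorem. Your closing remark about why an origin-centered ball (rather than the circumball) is the right choice is a sensible clarification but does not change the argument.
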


\begin{proof}
Since $K$ is compact, there exists $\rho \geq 0$ such that $K \subseteq \rho B$.
Since Steiner symmetrization is monotonic, we have 
$${\rm s}_{u_i} \cdots {\rm s}_{u_1} K \subseteq {\rm s}_{u_i} \cdots {\rm s}_{u_1} \rho B = \rho B$$
as well, so that sequence is bounded.  The Blaschke selection theorem \cite{Bonn2,red,Webster} then
implies that~(\ref{stseq2}) has a convergent subsequence. 
\end{proof}

Note that the original sequence $\{K_i\}$ defined by~(\ref{stseq2}) does not necessarily converge
to a limit.  If $L = \lim_{i} K_i$ exists, we write $L = {\rm s}_\alpha K$.  If $L$ is the limit of
some convergent subsequence of $\{K_i\}$, we say that $L$ is a {\em subsequential limit} of ${\rm s}_\alpha K$.

Since the layering function $\Omega$ is weakly 
increasing under Steiner symmetrization by Theorem~\ref{dend} 
and is also continuous and bounded above, 
the following is immediate.
\begin{proposition} If $L$ is a subsequential limit of ${\rm s}_\alpha K$, then
$$\Omega(L) = \sup_i \Omega(K_i)$$
\end{proposition}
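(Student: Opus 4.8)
The plan is to exploit the three properties of $\Omega$ that have just been established---monotonicity under Steiner symmetrization, continuity on all of $\KK_n$, and boundedness from above---to show that the scalar sequence $\{\Omega(K_i)\}$ converges, and that its limit is recovered along every convergent subsequence of bodies.

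First I would observe that the sequence $\{\Omega(K_i)\}$ is weakly increasing. Indeed, $K_{i+1} = {\rm s}_{u_{i+1}} K_i$, so Theorem~\ref{dend} gives
$$\Omega(K_{i+1}) = \Omega({\rm s}_{u_{i+1}} K_i) \geq \Omega(K_i)$$
for every $i$. Next I would note that this sequence is bounded above: by Proposition~\ref{subsq} all of the bodies $K_i$ lie inside a fixed ball $\rho B$, so the monotonicity of $\Omega$ yields $\Omega(K_i) \leq \Omega(\rho B)$ for all $i$. A weakly increasing sequence of real numbers that is bounded above converges to its supremum, whence
$$\lim_{i \rightarrow \infty} \Omega(K_i) = \sup_i \Omega(K_i).$$

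Finally, suppose $L$ is a subsequential limit, say $K_{i_j} \rightarrow L$ as $j \rightarrow \infty$. Since $\Omega$ is continuous on $\KK_n$, passing to the limit gives $\Omega(L) = \lim_{j \rightarrow \infty} \Omega(K_{i_j})$. But $\{\Omega(K_{i_j})\}$ is a subsequence of the convergent real sequence $\{\Omega(K_i)\}$, and hence shares its limit $\sup_i \Omega(K_i)$. Combining these two identities yields $\Omega(L) = \sup_i \Omega(K_i)$, as claimed.

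There is no serious obstacle here: the statement is essentially a packaging of the monotone convergence of a bounded increasing sequence together with the continuity of $\Omega$. The only point that genuinely requires the stated hypotheses in full strength is the appeal to continuity of $\Omega$ at $L$, which may well be a lower-dimensional set; this is covered precisely because $\Omega$ was observed to be continuous on \emph{all} of $\KK_n$, and not merely on bodies with nonempty interior.
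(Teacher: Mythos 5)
Your argument is correct and is precisely the one the paper intends: the paper states this proposition as ``immediate'' from the fact that $\Omega$ is weakly increasing under Steiner symmetrization (Theorem~\ref{dend}), continuous, and bounded above, and your write-up simply fills in that monotone-convergence-plus-continuity reasoning. No differences worth noting.
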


\begin{proposition} If ${\rm s}_\alpha M$ exists, and
if $L$ is a subsequential limit of ${\rm s}_\alpha K $, then
$$V_{n-1,1}( L, {\rm s}_\alpha M) = \inf_i V_{n-1,1}( K_i, {\rm s}_\alpha M)$$
\end{proposition}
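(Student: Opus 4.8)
The plan is to mirror the preceding proposition for $\Omega$, with ``monotone decreasing'' in place of ``monotone increasing.'' Writing $M_i$ for the $i$-th body of the Steiner process applied to $M$ along $\alpha$ (so that $M_i \to {\rm s}_\alpha M =: \tilde M$), set $a_i = V_{n-1,1}(K_i, \tilde M)$. I would first argue that the real sequence $\{a_i\}$ is nonincreasing, so that $\inf_i a_i = \lim_i a_i$, and then invoke the joint continuity of the mixed volume on $\KK_n \times \KK_n$ (with respect to the Hausdorff metric, valid even at lower-dimensional sets) to evaluate this common value at a subsequential limit. Indeed, if $K_{i_k} \to L$, then $a_{i_k} = V_{n-1,1}(K_{i_k}, \tilde M) \to V_{n-1,1}(L, \tilde M)$, while also $a_{i_k} \to \lim_i a_i = \inf_i a_i$; comparing gives $V_{n-1,1}(L, \tilde M) = \inf_i a_i$, as desired.

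The monotonicity of $\{a_i\}$ is where inequality~(\ref{reduce}) enters. Applying~(\ref{reduce}) with $u = u_{i+1}$ to the pair $(K_i, \tilde M)$, and using $K_{i+1} = {\rm s}_{u_{i+1}} K_i$, gives $a_i = V_{n-1,1}(K_i, \tilde M) \ge V_{n-1,1}({\rm s}_{u_{i+1}} K_i, {\rm s}_{u_{i+1}} \tilde M) = V_{n-1,1}(K_{i+1}, {\rm s}_{u_{i+1}} \tilde M)$. The right-hand side equals $a_{i+1}$ precisely when ${\rm s}_{u_{i+1}} \tilde M = \tilde M$, that is, when the limiting body $\tilde M$ is already symmetric in the direction $u_{i+1}$. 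So the whole argument reduces to the invariance claim that $\tilde M$ is fixed by ${\rm s}_u$ for every direction $u$ occurring in $\alpha$.

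To establish this invariance I would use the convergence of the $M$-process together with the continuity of Steiner symmetrization. Suppose $u$ occurs at indices $j_1 < j_2 < \cdots$, so that $M_{j_k} = {\rm s}_u M_{j_k - 1}$ with both $M_{j_k}$ and $M_{j_k - 1}$ converging to $\tilde M$. Provided $\tilde M$ has nonempty interior, Lemma~\ref{pmK} together with the monotonicity~(\ref{betst}) makes ${\rm s}_u$ continuous at $\tilde M$, so letting $k \to \infty$ yields ${\rm s}_u \tilde M = \tilde M$. When $\tilde M$ has empty interior it lies in a hyperplane and must be treated separately, for instance by reduction to the descending-sequence continuity of Proposition~\ref{descend}.

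The main obstacle is exactly this invariance, and it is forced on us by the shape of~(\ref{reduce}): since that inequality symmetrizes both arguments at once, one cannot pass from $K_i$ to $K_{i+1}$ in the first slot while holding $\tilde M$ fixed in the second unless ${\rm s}_{u_{i+1}} \tilde M = \tilde M$. It is instructive that the diagonal sequence $b_i = V_{n-1,1}(K_i, M_i)$ is nonincreasing \emph{unconditionally} --- applying~(\ref{reduce}) to the pair $(K_i, M_i)$ gives $b_i \ge V_{n-1,1}({\rm s}_{u_{i+1}} K_i, {\rm s}_{u_{i+1}} M_i) = b_{i+1}$ --- and, since $M_i \to \tilde M$ with the $K_i$ confined to a compact family, uniform continuity of the mixed volume yields $|a_i - b_i| \to 0$. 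This already pins down the limiting value $\lim_i a_i = \lim_i b_i = V_{n-1,1}(L, \tilde M)$; what the identification with $\inf_i a_i$, rather than merely $\lim_i a_i$, still requires is that $\{a_i\}$ not dip below its limit, which is precisely the monotonicity furnished by the invariance of $\tilde M$. Verifying that invariance, and separately handling the case where $\tilde M$ is lower-dimensional and Steiner symmetrization is no longer continuous, is the only genuinely delicate part of the argument.
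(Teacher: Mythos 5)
Your closing ``instructive'' observation is, in fact, the paper's entire proof: the paper applies~(\ref{reduce}) to the diagonal sequence $b_i = V_{n-1,1}(K_i, M_i)$ (where $M_i = {\rm s}_{u_i}\cdots{\rm s}_{u_1}M$), notes that $b_i$ is nonincreasing, and uses continuity of mixed volumes along the subsequence $K_{i_j} \rightarrow L$, $M_{i_j} \rightarrow {\rm s}_\alpha M$ to conclude that $V_{n-1,1}(L, {\rm s}_\alpha M) = \inf_i b_i$. The infimum in the statement is to be read as the infimum of this diagonal sequence; that is what the proof establishes, and it is all that is needed downstream (Proposition~\ref{uniqmix} requires only that the value $V_{n-1,1}(L, {\rm s}_\alpha M)$ be independent of the subsequential limit $L$). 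So the part of your argument that you relegate to an aside is complete and is essentially the paper's proof.

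Your main line --- monotonicity of $a_i = V_{n-1,1}(K_i, {\rm s}_\alpha M)$ with the limit body held fixed in the second slot --- has a genuine gap, and you have correctly located it: it requires ${\rm s}_{u_{i+1}}({\rm s}_\alpha M) = {\rm s}_\alpha M$ for every $i$. That invariance does hold for directions recurring infinitely often, but by a simpler argument than the one you sketch: each $M_{j_k}$ with $u_{j_k} = u$ is symmetric under reflection across $u^\perp$, reflection symmetry passes to Hausdorff limits, and a reflection-symmetric body is fixed by ${\rm s}_u$; no interiority hypothesis and no continuity of ${\rm s}_u$ is needed, so your unresolved lower-dimensional case evaporates. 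However, the invariance genuinely fails for directions occurring only finitely often: the paper's own example $\alpha = \{u, v, v, \ldots\}$ gives ${\rm s}_\alpha M = {\rm s}_v {\rm s}_u M$, which need not be ${\rm s}_u$-symmetric, even though $u$ occurs in $\alpha$. Hence the step $a_i \geq a_{i+1}$ is unavailable whenever such a direction is applied, and the literal identity $V_{n-1,1}(L,{\rm s}_\alpha M) = \inf_i a_i$ could fail if some early $a_i$ dips below the limiting value. The remedy is simply to adopt the diagonal reading of the infimum, as the paper's proof implicitly does.
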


\begin{proof} We are given that $L = \lim_j K_{i_j}$ for some subsequence $\{K_{i_j}\}$ of~(\ref{stseq2}). 
The continuity of mixed volumes implies that the sequence
\begin{align}
V_{n-1,1}( K_{i_j}, {\rm s}_{u_{i_j}} \cdots {\rm s}_{u_1} M)
\label{mixsubseq}
\end{align}
converges to $V_{n-1,1}(L, {\rm s}_\alpha M)$.
Since $V_{n-1,1}( K_i, {\rm s}_{u_i} \cdots {\rm s}_{u_1} M )$ is decreasing with respect
to $i$ by~(\ref{reduce}), the corresponding subsequence~(\ref{mixsubseq}) is also decreasing, 
and the proposition follows.
\end{proof}

In particular, we have the following.
\begin{proposition} Suppose that ${\rm s}_\alpha M $ exists.
If ${\rm s}_\alpha K $ has a subsequential limits $L_1$ and $L_2$, then 
$$V_{n-1,1}( L_1, {\rm s}_\alpha M ) = V_{n-1,1}( L_2, {\rm s}_\alpha M )$$
\label{uniqmix}
\end{proposition}

Because Steiner symmetrization may be discontinuous on sequences of bodies converging to lower dimensional
limits, the next proposition is sometimes helpful.
\begin{proposition} 
% If ${\rm s}_\alpha C_m$ converges for every convex body $C_m$ in a descending sequence
% $$C_1 \supseteq C_2 \supseteq C_3 \supseteq \cdots$$
% then ${\rm s}_\alpha C$ converges, where 
% $$C = \bigcap_m C_m.$$
Suppose that 
$$C_1 \supseteq C_2 \supseteq C_3 \supseteq \cdots$$
is a descending sequence of compact convex sets in $\RR^n$, and denote
$$C = \bigcap_m C_m.$$
If ${\rm s}_\alpha C_m$ converges for each $C_m$, then ${\rm s}_\alpha C$ converges
to the limit
$${\rm s}_\alpha C = \bigcap_m {\rm s}_\alpha C_m.$$
\label{bigcapconv}
\end{proposition}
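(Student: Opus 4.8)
The plan is to prove that every subsequential limit of the sequence $C^{[i]} := {\rm s}_{u_i}\cdots{\rm s}_{u_1} C$ (which is bounded, and hence has subsequential limits, by Proposition~\ref{subsq}) is equal to the single body $L := \bigcap_m {\rm s}_\alpha C_m$. It then follows from compactness of the ambient Blaschke space that the whole sequence converges to $L$, which is exactly the assertion ${\rm s}_\alpha C = \bigcap_m {\rm s}_\alpha C_m$.

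Write $C_m^{[i]} := {\rm s}_{u_i}\cdots{\rm s}_{u_1} C_m$ and $L_m := {\rm s}_\alpha C_m$. First I would record the bookkeeping. Applying monotonicity of Steiner symmetrization through the finite composition gives $C^{[i]} \subseteq C_m^{[i]}$ and $C_{m+1}^{[i]} \subseteq C_m^{[i]}$ for all $i,m$; letting $i \to \infty$ shows $L_{m+1} \subseteq L_m$, so $\{L_m\}$ is descending and $L = \lim_m L_m = \bigcap_m L_m$ by~(\ref{decr-inter}). Applying Proposition~\ref{descend} once for each of ${\rm s}_{u_1}, \dots, {\rm s}_{u_i}$ shows that, for every fixed $i$, $C_m^{[i]} \to C^{[i]}$ as $m \to \infty$. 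Finally, if $L'$ is any subsequential limit, say $C^{[i_j]} \to L'$, then $C^{[i_j]} \subseteq C_m^{[i_j]} \to L_m$, and inclusions persist under Hausdorff limits, so $L' \subseteq L_m$ for every $m$ and hence $L' \subseteq L$.

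The main obstacle is the reverse inclusion, since a subsequential limit could a priori be strictly smaller than $L$. The layering function of Theorem~\ref{dend} is of no help here, because $C$ may be lower dimensional, in which case $\Omega$ vanishes along the entire sequence. Instead I would use the mean width $W$, which is continuous on all of $\KK_n$, satisfies $W({\rm s}_u K) \le W(K)$, and is strictly monotone under inclusion even for lower dimensional bodies: if $K_1 \subseteq K_2$ with $K_1 \neq K_2$, then $h_{K_1} \le h_{K_2}$ with strict inequality on an open subset of the sphere, and since $W$ is a positive multiple of the spherical integral of the support function, $W(K_1) < W(K_2)$. Because $W(C^{[i]})$ is nonincreasing in $i$, continuity of $W$ gives $W(L') = \inf_i W(C^{[i]})$; because $C_m^{[i]} \to C^{[i]}$ with $W(C_m^{[i]})$ nonincreasing in $m$, continuity gives $W(C^{[i]}) = \inf_m W(C_m^{[i]})$; and the same reasoning applied to the convergent sequence ${\rm s}_\alpha C_m$ gives $W(L_m) = \inf_i W(C_m^{[i]})$. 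Swapping the order of the two infima then yields
$$W(L') = \inf_i \inf_m W(C_m^{[i]}) = \inf_m \inf_i W(C_m^{[i]}) = \inf_m W(L_m) = W(L),$$
the last equality by continuity of $W$ along $L_m \downarrow L$. Combining $L' \subseteq L$ with $W(L') = W(L)$ and the strict monotonicity of $W$ forces $L' = L$.

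Since every subsequential limit of the bounded sequence $\{C^{[i]}\}$ equals $L$, the sequence itself converges to $L$, which completes the proof. The crux of the argument is the recognition that mean width, rather than the layering function, is the correct strictly monotone and Steiner-decreasing functional, precisely because it remains strictly monotone on lower dimensional bodies where $\Omega$ degenerates.
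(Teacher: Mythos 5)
Your proof is correct and follows essentially the same route as the paper's: both establish $L' \subseteq \bigcap_m {\rm s}_\alpha C_m$ by monotonicity, then use the mean width together with Proposition~\ref{descend} and an interchange of infima to force $W(L') = W\bigl(\bigcap_m {\rm s}_\alpha C_m\bigr)$ and hence equality of the sets. The only difference is cosmetic: you spell out the strict monotonicity of $W$ under proper inclusion, which the paper leaves implicit in the final step.
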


\begin{proof} 
Let $L$ be a subsequential limit of ${\rm s}_\alpha C$.  For each $m$ let $D_m = {\rm s}_\alpha C_m$.
Since $C \subseteq C_m$ for each $m$, the subsequential limit 
$L$ of ${\rm s}_\alpha C$ lies inside each $D_m$, so that
$$L \subseteq \bigcap_m D_m = D.$$
Meanwhile, since Steiner symmetrization does not increase mean width,
the non-negative sequence of values $W({\rm s}_{u_j} \ldots {\rm s}_{u_2} {\rm s}_{u_1} C)$ is decreasing, so that
$$\lim_j W({\rm s}_{u_j} \ldots {\rm s}_{u_2} {\rm s}_{u_1} C) 
= \inf_j W({\rm s}_{u_j} \ldots {\rm s}_{u_2} {\rm s}_{u_1} C) = \mu$$
exists.  Since $W$ is continuous, we must have $W(L) = \mu$.  It also follows from~(\ref{decr-inter}) that 
\begin{align*}
W(D) = \; \inf_m W(D_m) 
& = \; \inf_m \inf_j W \big({\rm s}_{u_j} \ldots {\rm s}_{u_2} {\rm s}_{u_1} C_m \big) \\
& = \; \inf_j \inf_m W \big({\rm s}_{u_j} \ldots {\rm s}_{u_2} {\rm s}_{u_1} C_m \big). 
\end{align*}
By Proposition~\ref{descend}, 
$${\rm s}_{u_j} \ldots {\rm s}_{u_2} {\rm s}_{u_1} C_m 
\rightarrow {\rm s}_{u_j} \ldots {\rm s}_{u_2} {\rm s}_{u_1} C,$$
so that 
$$W({\rm s}_{u_j} \ldots {\rm s}_{u_2} {\rm s}_{u_1} C_m) 
\rightarrow W({\rm s}_{u_j} \ldots {\rm s}_{u_2} {\rm s}_{u_1} C).$$
Hence,
\begin{align*}
W(D) & = \; \inf_j W({\rm s}_{u_j} \ldots {\rm s}_{u_2} {\rm s}_{u_1} C) \; = \; \mu.
\end{align*}
Since $L \subseteq D$ and $W(L) = W(D) = \mu$, it follows that $L=D$.  

We have shown that every subsequential limit of ${\rm s}_\alpha C$ has the same limit $D$.  
If the full sequence ${\rm s}_\alpha C$ does not converge, there is a subsequence $\gamma$ 
of ${\rm s}_\alpha C$ that stays some distance $\varepsilon > 0$ from $D$.  Since the
sequence ${\rm s}_\alpha C$ is uniformly bounded, so is the subsequence $\gamma$.  The Blaschke
selection theorem \cite[p. 97]{Webster} implies that $\gamma$, and therefore ${\rm s}_\alpha C$,
has a convergent subsequence $\gamma'$.  By the previous argument $\gamma'$ has limit $D$,
contradicting the construction of $\gamma$.
It follows that the original sequence ${\rm s}_\alpha C$ 
converges, and therefore must converge to the limit $D$.
\end{proof}

These results together lead to the following uniqueness theorem.
\begin{theorem} Suppose that $K \in \KK_n$ has non-empty interior.
If ${\rm s}_\alpha L = L$ for all subsequential limits $L$ of 
${\rm s}_\alpha K$ then
${\rm s}_\alpha K$ converges.
\label{fix}
\end{theorem}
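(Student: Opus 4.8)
The plan is to show that \({\rm s}_\alpha K\) has a \emph{unique} subsequential limit; since the sequence \(\{K_i\}\) is uniformly bounded by Proposition~\ref{subsq}, a unique subsequential limit forces convergence of the full sequence by exactly the Blaschke selection argument already used at the end of the proof of Proposition~\ref{bigcapconv}. So suppose \(L_1\) and \(L_2\) are two subsequential limits of \({\rm s}_\alpha K\); I want to prove \(L_1 = L_2\). First I would record that Steiner symmetrization preserves volume, so \(V_n(K_i) = V_n(K)\) for every \(i\); since \(K\) has non-empty interior and \(V_n\) is continuous, every subsequential limit \(L\) satisfies \(V_n(L) = V_n(K) > 0\) and hence also has non-empty interior.

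Next I would exploit the hypothesis through the mixed-volume uniqueness already developed. By assumption \({\rm s}_\alpha L_1 = L_1\) and \({\rm s}_\alpha L_2 = L_2\), so both limits exist. Applying Proposition~\ref{uniqmix} with \(M = L_1\) gives \(V_{n-1,1}(L_1, L_1) = V_{n-1,1}(L_2, L_1)\), and with \(M = L_2\) gives \(V_{n-1,1}(L_2, L_2) = V_{n-1,1}(L_1, L_2)\). Since \(V_{n-1,1}(L,L) = V_n(L)\), these read \(V_{n-1,1}(L_2, L_1) = V_n(L_1)\) and \(V_{n-1,1}(L_1, L_2) = V_n(L_2)\). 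Writing \(\mathcal{V} = V_n(L_1) = V_n(L_2)\), Minkowski's first inequality \(V_{n-1,1}(L_2,L_1)^n \geq V_n(L_2)^{n-1}V_n(L_1)\) becomes \(\mathcal{V}^n \geq \mathcal{V}^n\), an equality; its equality case (valid since both bodies have non-empty interior) forces \(L_1\) and \(L_2\) to be homothetic, and equal volumes make the homothety ratio equal to \(1\). Hence \(L_1 = L_2 + b\) for some translation vector \(b\).

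The main obstacle is to rule out a nonzero \(b\), and this is precisely where the stringency of the layering function is needed (surface area, for instance, would only return the bodies up to a translate). Here I would use \({\rm s}_\alpha L_1 = L_1\) together with Theorem~\ref{dend}: the sequence \(\Omega({\rm s}_{u_k}\cdots {\rm s}_{u_1}L_1)\) is non-decreasing in \(k\) and converges to \(\Omega(L_1)\), so every term is at most \(\Omega(L_1)\); combining this with \(\Omega({\rm s}_{u_1}L_1) \geq \Omega(L_1)\) and the strict equality case of Theorem~\ref{dend} gives \({\rm s}_{u_1}L_1 = L_1\), and an easy induction yields \({\rm s}_{u_i}L_1 = L_1\) for every index \(i\) (and likewise for \(L_2\)). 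Thus each limit is symmetric under reflection across \(u_i^\perp\) for every \(i\), which forces \(b \cdot u_i = 0\) for all \(i\); that is, \(b \perp U\), where \(U = \operatorname{span}\{u_i\}\). On the other hand, Steiner symmetrization in a direction \(u_i \in U\) fixes the orthogonal projection onto \(U^\perp\) (a slice along \(u_i\) is left non-empty exactly when it was non-empty, and \(u_i\) projects to \(0\) in \(U^\perp\)), so \(\pi_{U^\perp}(K_i) = \pi_{U^\perp}(K)\) for all \(i\) and hence, by continuity of projection, \(\pi_{U^\perp}(L_1) = \pi_{U^\perp}(L_2)\). Since \(b \in U^\perp\) and \(L_1 = L_2 + b\), applying \(\pi_{U^\perp}\) gives \(b = 0\), so \(L_1 = L_2\). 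This establishes uniqueness of the subsequential limit, and therefore convergence of \({\rm s}_\alpha K\).
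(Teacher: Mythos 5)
Your proof is correct and follows essentially the same route as the paper's: Proposition~\ref{uniqmix} together with the equality case of Minkowski's inequality shows that any two subsequential limits are translates, the translation vector is then eliminated using the fact that the component of the process orthogonal to ${\rm span}\{u_i\}$ never changes, and Blaschke selection upgrades uniqueness of the subsequential limit to convergence of the full sequence. The only (harmless) differences are that you justify ${\rm s}_{u_i}L = L$ for each individual $i$ via the layering function $\Omega$ where the paper simply asserts ${\rm s}_\alpha x = x$, and you phrase the final cancellation via the projection onto $U^\perp$ where the paper evaluates support functions at the translation vector and splits into the basis versus proper-subspace cases.
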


\begin{proof}  By the Blaschke selection theorem,
every subsequence of ${\rm s}_\alpha K$ % \sloppy
has a  sub-subsequence converging to
a limit.  Suppose that $L_1$ and $L_2$ are two such limits.

We are given that ${\rm s}_\alpha L_j  = L_j$ for each $j$.  By Proposition~\ref{uniqmix}
and the volume invariance of Steiner symmetrization, 
$$V_{n-1,1}( L_1, L_2) = V_{n-1,1}( L_2, L_2) = V_n(L_2) = V_n(K) = V_n(L_1).$$
Since $V_n(K) > 0$, the same is true of all symmetrals of $K$. 
It follows from the equality conditions of the Minkowski inequality for mixed volumes 
(see, for example, \cite{red,Webster})
that $L_1$
and $L_2$ are translates, so that $L_2 = L_1 + x$ for some $x \in \RR^n$.

Since ${\rm s}_\alpha L_j = L_j$ for each $j$, it follows that ${\rm s}_\alpha x = x$, so that
$x \in u_i^\perp$ for each $u_i \in \alpha$.  If the sequence $\alpha$ contains
a basis for $\RR^n$, then $x = 0$, and $L_1 = L_2$.  

If the sequence $\alpha$ spans a 
proper subspace $\xi$ of $\RR^n$, then $x \in \xi^\perp$.  Since every symmetrizing direction
$u_i$ of $\alpha$ lies in $\xi$, the supporting plane of $K$ normal to $x$ also supports
each symmetral $K_i$, so that 
$h_{K_i}(x) = h_K(x)$ for all $i$.  After taking limits it follows that
$$h_{L_1}(x) = h_K(x) = h_{L_2}(x) = h_{L_1 + x}(x) = h_{L_1}(x) + x \cdot x,$$
so that $x \cdot x = 0$ and $L_2 = L_1$ once again.

We have shown that every convergent subsequence of ${\rm s}_\alpha K$ converges to $L_1$.
If the full sequence~${\rm s}_\alpha K$ does not converge, there is a subsequence $\gamma$ 
of~${\rm s}_\alpha K$ that stays some distance $\varepsilon > 0$ from $L_1$.  Since the
sequence~${\rm s}_\alpha K$ is uniformly bounded, so is the subsequence $\gamma$.  The Blaschke
selection theorem \cite[p. 97]{Webster} implies that $\gamma$, and therefore ${\rm s}_\alpha K$,
has a convergent subsequence $\gamma'$.  By the previous argument $\gamma'$ has limit $L_1$,
contradicting the construction of $\gamma$.
It follows that the original sequence ${\rm s}_\alpha K$ 
converges, and therefore must converge to the limit $L_1$.
\end{proof}

The condition that ${\rm s}_\alpha L = L$ for every subsequential limit $L$ is required 
for the proof of Theorem~\ref{fix} and does not hold for Steiner processes in general.  
Indeed, even when a Steiner process {\em converges}, 
it may not be the case that the limit is invariant under ${\rm s}_\alpha$.
In other words, the converse of Theorem~\ref{fix} is false.

A simple counterexample to the converse is constructed as follows.  Let $u$ and $v$
be distinct non-orthogonal unit vectors in $\RR^2$, and 
let $\alpha$ denote the sequence $\{u, v, v, \ldots \}$,
where $v$ is repeated forever.  If $K$ is any compact convex set in $\RR^2$,
then
${\rm s}_\alpha K = {\rm s}_v {\rm s}_u K$, since ${\rm s}_v$ is idempotent.
But ${\rm s}_v {\rm s}_u K \neq {\rm s}_v {\rm s}_u {\rm s}_v {\rm s}_u K$ in general
(for example, if $K$ is any line segment of positive length), so that
${\rm s}_\alpha {\rm s}_\alpha K \neq {\rm s}_\alpha K$.

\section{Steiner processes using a finite set of directions}
\label{finsetdir}

Suppose that $\alpha = \{u_1, u_2, \ldots \}$ is a sequence of unit vectors such that each $u_i$ is chosen
from a given {\bf finite} list of permitted directions $\{v_1, \ldots, v_m\}$.
\begin{theorem}  Let $K \in \KK_n$.  The sequence
${\rm s}_\alpha K$ has a limit $L \in \KK_n$.  Moreover, $L$ is symmetric under reflection
in each of the directions $v_i$ occurring infinitely often in the sequence.
\label{findir}
\end{theorem}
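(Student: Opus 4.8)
The plan is to deduce the theorem from the fixed-point criterion of Theorem~\ref{fix}, after two reductions. First I would reduce to the case that $K$ has nonempty interior: writing $K=\bigcap_j C_j$ with $C_j=K+\tfrac1j B$, each $C_j$ has nonempty interior, and if ${\rm s}_\alpha C_j$ converges for every $j$ then Proposition~\ref{bigcapconv} yields convergence of ${\rm s}_\alpha K$ (to $\bigcap_j {\rm s}_\alpha C_j$). Second, since only finitely many directions appear at all, there is an index $N$ after which $\alpha$ uses only the directions occurring infinitely often; replacing $K$ by the symmetral $K_N={\rm s}_{u_N}\cdots {\rm s}_{u_1}K$ (still of nonempty interior by~(\ref{rRsu})) and $\alpha$ by its tail, I may assume that \emph{every} permitted direction $v_1,\dots,v_m$ occurs infinitely often. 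By~(\ref{rRsu}) all the bodies $K_i$, and hence all subsequential limits, lie in a fixed $\KK^n_{r,R}$ with $r>0$, so on this family $\Omega$ and each ${\rm s}_{v_j}$ are continuous and $(K,u)\mapsto {\rm s}_u K$ is uniformly continuous.

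Under these reductions ${\rm s}_\alpha L=L$ is equivalent to $L$ being symmetric across each $v_j^\perp$, so by Theorem~\ref{fix} it suffices to prove that \emph{every} subsequential limit $L$ of ${\rm s}_\alpha K$ is symmetric in every $v_j$. Put $\Omega_\infty=\lim_i \Omega(K_i)=\sup_i \Omega(K_i)$; since $\Omega$ is nondecreasing under symmetrization (Theorem~\ref{dend}) and continuous, every subsequential limit satisfies $\Omega(L)=\Omega_\infty$. Because $L$ has nonempty interior, the equality clause of Theorem~\ref{dend} reduces the goal to showing $\Omega({\rm s}_{v_j}L)=\Omega_\infty$ for each $j$.

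The heart of the matter is a shift-and-diagonalize argument. Given $L=\lim_l K_{n_l}$, the shifted direction sequences $(u_{n_l+1},u_{n_l+2},\dots)$ live in the compact space $\{v_1,\dots,v_m\}^{\NN}$, so after passing to a subsequence they converge coordinatewise to a pattern $(u_1^{*},u_2^{*},\dots)$. For each fixed $k$ the first $k$ symbols are eventually constant, so by continuity $K_{n_l+k}\to {\rm s}_{u_k^{*}}\cdots {\rm s}_{u_1^{*}}L$; this is again a subsequential limit, hence has layering value $\Omega_\infty$. As $\Omega$ only increases along $L,\ {\rm s}_{u_1^{*}}L,\ {\rm s}_{u_2^{*}}{\rm s}_{u_1^{*}}L,\dots$ while all these values equal $\Omega_\infty$, the equality clause of Theorem~\ref{dend} forces each step to fix the body; thus $L$ is symmetric in every direction occurring in the limit pattern $(u_k^{*})$.

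The obstacle I expect here is exactly that $(u_k^{*})$ need not contain a given $v_j$: if the first occurrence of $v_j$ after time $n_l$ recedes to infinity, that direction is lost in the shift-limit, so this argument alone certifies symmetry only in the recurrent directions of $(u_k^{*})$. To capture the remaining directions I would couple the above with the mixed-volume uniqueness of Proposition~\ref{uniqmix}. Sampling the process just before each occurrence of a fixed $v_j$ and using $\Omega(K_{i+1})-\Omega(K_i)\to 0$ produces at least one subsequential limit that \emph{is} symmetric in $v_j$; feeding successively more symmetric limits into Proposition~\ref{uniqmix} (with comparison bodies $M$ for which ${\rm s}_\alpha M$ exists, starting from a ball and building up to a body symmetric in all $v_j$) and invoking the equality case of the Minkowski inequality forces all subsequential limits to be translates of a single body $L^{*}$ symmetric in every $v_j$. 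The translation is then eliminated exactly as in the proof of Theorem~\ref{fix}: if the $v_j$ span $\RR^n$ it must vanish, and otherwise the supporting hyperplanes normal to the translation direction are preserved throughout the process. Producing that one fully symmetric limit $L^{*}$ — equivalently, knowing that the supremum of $\Omega$ along this particular sequence already realizes its supremum over the entire orbit — is the step where the finiteness of the direction set must be used in an essential way, and is where I expect the real difficulty to lie.
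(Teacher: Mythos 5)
Your overall architecture matches the paper's: reduce to nonempty interior via Proposition~\ref{bigcapconv}, pass to a tail so that every permitted direction occurs infinitely often, and then invoke Theorem~\ref{fix} after showing that every subsequential limit $L$ is fixed by each ${\rm s}_{v_j}$, using the layering function $\Omega$ and its equality case as the engine. Your shift-and-diagonalize step is sound as far as it goes: it certifies ${\rm s}_{v}L=L$ for every direction $v$ appearing in the limit pattern $(u_k^{*})$. But you have correctly diagnosed, and not repaired, the real gap: a direction $v_j$ whose first occurrence after time $n_l$ recedes to infinity is invisible to the shift limit. The patch you sketch does not close it. Sampling just before each occurrence of $v_j$ gives, for each $j$ \emph{separately}, some subsequential limit symmetric in $v_j$, but these may be different bodies for different $j$; and Proposition~\ref{uniqmix} cannot merge them as you describe, because it requires a comparison body $M$ for which ${\rm s}_\alpha M$ is already known to exist. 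Taking $M=B$ only tells you that all subsequential limits share the same surface area, which does not trigger the equality case of Minkowski's inequality; taking $M$ to be one of the limits themselves presupposes convergence of ${\rm s}_\alpha M$, which is essentially the theorem. (In Theorem~\ref{fix} this circularity is broken precisely by the \emph{hypothesis} ${\rm s}_\alpha L=L$, which is what you are still trying to establish.)

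The paper's device for the receding directions is the one ingredient your proposal lacks, and it is elementary: argue by induction on the set of directions already known to fix $L$. Suppose $L={\rm s}_{v_1}L=\cdots={\rm s}_{v_k}L$ along a subsequence $K_{i_j}\to L$. Let $\tilde{Q}_j$ be the body in the original process immediately before the \emph{first} application, after stage $i_j$, of some direction outside $\{v_1,\dots,v_k\}$ (after refining, always the same $v_{k+1}$). The stretch from $K_{i_j}$ to $\tilde{Q}_j$ uses only directions in $\{v_1,\dots,v_k\}$, all of which fix $L$; so from the sandwich $(1-\tau)L\subseteq K_{i_j}\subseteq(1+\tau)L$ of Lemma~\ref{pmK} and monotonicity of Steiner symmetrization one gets $(1-\tau)L\subseteq\tilde{Q}_j\subseteq(1+\tau)L$ \emph{no matter how long that stretch is}. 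Hence $\tilde{Q}_j\to L$ and $Q_j={\rm s}_{v_{k+1}}\tilde{Q}_j\to{\rm s}_{v_{k+1}}L$, and the usual $\Omega$-monotonicity contradiction forces ${\rm s}_{v_{k+1}}L=L$. This sandwich argument is exactly what defeats the ``first occurrence goes to infinity'' obstacle, and it is where the finiteness of the direction set is used (to run the induction to completion); without it, or an equivalent substitute, your proof is incomplete.
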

In other words, a Steiner process using a finite set of directions {\em always} converges.

\begin{proof} To begin, suppose that $K$ has nonempty interior. 
Without loss of generality (passing to a suitable tail of the sequence), we may
assume that each of the directions $v_i$ occurs infinitely often.
In view of Theorem~\ref{fix} it is then sufficient to show that every subsequential limit
of ${\rm s}_\alpha K$ is invariant under ${\rm s}_{v_i}$ for each $i$.  

Let $L$ denote the limit of some convergent subsequence of ${\rm s}_{\alpha} K$.  
Since the list of distinct vectors $v_i$ is finite, some $v_i$ occurs infinitely
often as the final iterate in this subsequence.  Without loss of generality,
relabel the directions $\{v_i\}$ so that $v_1$ is this recurring final direction.
Passing to the sub-subsequence $\{K_{i_j}\}$ 
where this
occurs, we are left with a sequence of the form
$$\{ K_{i_j} \}  = \{ {\rm s}_{v_1} {\rm s}_{u_{i_j - 1}} \cdots  {\rm s}_{u_1} K \}$$ 
where each $u_{i_j} = v_1$.  

Since every $K_{i_j}$ is an ${\rm s}_{v_1}$ symmetral, it is immediate that $L = \lim_j K_{i_j}$
is symmetric under reflection across $v_1^\perp$.

Note that each successor to $K_{i_j}$ in the original sequence $K_i$ has the form
$$K_{i_j +1}  = {\rm s}_{u_{i_j + 1}} {\rm s}_{v_1} {\rm s}_{u_{i_j - 1}} \cdots  {\rm s}_{u_1} K.$$
The direction $u_{i_j + 1}$ must attain one of the values $v_i$ infinitely often.  
Since ${\rm s}_{v_1} {\rm s}_{v_1} = {\rm s}_{v_1}$, we may
(without loss of generality) 
suppose this new direction is $v_2$, and that $v_2 \neq v_1$.  Let us pass further to the 
sub-subsequence
where every $u_{i_j + 1} = v_2$. 
It now follows that
$${\rm s}_{v_2} L = \lim_j {\rm s}_{v_2} K_{i_j} = \lim_j K_{i_j+1}.$$

Suppose that ${\rm s}_{v_2} L \neq L$.
In this case the strict monotonicity of $\Omega$ yields
$$\Omega({\rm s}_{v_2} L) - \Omega(L)> \varepsilon > 0$$
for some $\varepsilon > 0$.  By the continuity of $\Omega$ 
and the definition of $L$ there is an integer $M > 0$
such that
$$\Omega({\rm s}_{v_2} K_{i_j}) - \Omega(K_{i_t})> \frac{\varepsilon}{2} > 0$$
for all $j, t > M$.  But the monotonicity of $\Omega$ implies that
$$\Omega(K_{i_t}) \geq \Omega(K_{i_j+1}) = \Omega({\rm s}_{v_2} K_{i_j})$$
when $i_t > i_j$, a contradiction.  It follows that
$${\rm s}_{v_2} L = L.$$

More generally, suppose that $L = {\rm s}_{v_1}L = \cdots = {\rm s}_{v_k} L$, where $L$ is the limit
of the subsequence $K_{i_j}$.  For each $j$, let $Q_j$ be the first successor of $K_{i_j}$ in the
original sequence $K_i$ whose final iterated Steiner symmetrization uses a direction $v_t$ for $t > k$.
Again some particular $v_t$ must appear infinitely often as the final direction for the symmetrals $Q_j$.  
Without loss of generality, and passing to subsequences as needed, suppose this direction is always $v_{k+1}$.
Let $\tilde{Q}_j$ denote the immediate predecessor of each $Q_j$ in the original sequence $K_i$, 
so that $Q_j = s_{v_{k+1}} \tilde{Q}_j$.  

Again, passing to subsequences as needed, we may assume (by omitting repetitions) 
that each $Q_j$ corresponds to a distinct entry
of the original sequence $K_i$, so that $Q_t$ appears strictly 
later than $Q_j$ in the original sequence whenever $t > j$.

Since the subsequence $K_{i_j} \rightarrow L$ and $L$ has nonempty interior, Lemma~\ref{pmK} implies that,
for any given $\tau \in (0,1)$,
$$(1-\tau) L \subseteq K_{i_j} \subseteq (1+\tau) L$$
for sufficiently large $i_j$.  Since each $\tilde{Q}_j$ is a 
finite iteration of Steiner symmetrals of $K_{i_j}$ 
using only directions from the list $\{v_1, \ldots, v_k\}$, and 
because $L = {\rm s}_{v_1}L = \cdots = {\rm s}_{v_k} L$, it follows from the monotonicity of Steiner
symmetrization that 
$$(1-\tau) L \subseteq \tilde{Q}_j \subseteq (1+\tau) L$$
sufficiently large $j$, so that $\tilde{Q}_j \rightarrow L$ as well. 
It then follows from the monotonicity of ${\rm s}_{v_{k+1}}$ that
$$(1-\tau) {\rm s}_{v_{k+1}} L \subseteq Q_j \subseteq (1+\tau) {\rm s}_{v_{k+1}} L.$$
In other words, $Q_j \rightarrow {\rm s}_{v_{k+1}} L$.

Suppose that ${\rm s}_{v_{k+1}} L \neq L$.
In this case the strict monotonicity of $\Omega$ yields
$$\Omega({\rm s}_{v_{k+1}} L) - \Omega(L)> \varepsilon > 0$$
for some $\varepsilon > 0$.  
Since $Q_j \rightarrow {\rm s}_{v_{k+1}} L$ and $\tilde{Q}_j \rightarrow L$,
the continuity of $\Omega$ implies that
$$\Omega(Q_j) - \Omega(\tilde{Q}_t)> \frac{\varepsilon}{2} > 0$$
for all $j, t > M$, provided $M$ is sufficiently large.  But the monotonicity of $\Omega$ over the original sequence $K_i$ implies that
$$\Omega(\tilde{Q}_t) \geq \Omega(Q_j) = \Omega({\rm s}_{v_{k+1}} \tilde{Q}_j)$$
when $t > j$, a contradiction.  It follows that
$${\rm s}_{v_{k+1}} L = L.$$

% We now repeat the previous argument (using the strict monotonicity of $\Omega$) to conclude that
% $${\rm s}_{v_{k+1}} L = L.$$

It now follows that $L$ is symmetric under reflection in each of the directions $v_i$, so that 
${\rm s}_{\alpha} L = L$.  In other words $L$ is a fixed point for the process ${\rm s}_{\alpha}$.
Since this argument applies to every subsequential limit $L$ of ${\rm s}_\alpha K$, 
it follows from Theorem~\ref{fix} that these subsequential limits are identical, and 
that the original sequence $K_i$ converges to $L$.

Finally, suppose that $K$ has empty interior.  For each integer $m>0$, the parallel body 
$C_m = K + \frac{1}{m}B$ has interior, so the limit of ${\rm s}_\alpha C_m$ exists, by the previous argument.
Since each $C_{m} \supseteq C_{m+1}$, and
$$K = \bigcap_m C_m,$$
it follows from Proposition~\ref{bigcapconv} that the limit of ${\rm s}_\alpha K$ exists, and is given by
$${\rm s}_\alpha K = \bigcap_m {\rm s}_\alpha C_m.$$
Since each ${\rm s}_\alpha C_m$ is symmetric under reflection in each of the directions $v_i$, the
limit ${\rm s}_\alpha K$ is also symmetric under each of those reflections.
\end{proof} 

Recall that if $K \in \KK_n$ and $u \in \SS^{n-1}$, then ${\rm s}_u {\rm s}_u K = {\rm s}_u K$.  This is a trivial
consequence of the fact that ${\rm s}_u K$ is symmetric under reflection across $u^\perp$, so that any 
subsequent iteration
of ${\rm s}_u$ makes no difference.  On the other hand, given two non-identical and non-orthogonal directions
$u$ and $v$, it may easily happen that
$${\rm s}_u {\rm s}_v K \neq {\rm s}_u {\rm s}_v {\rm s}_u {\rm s}_v K.$$
More generally, there is no reason to believe that a Steiner process ${\rm s}_\alpha$ (whether finite or infinite) 
is idempotent.  However, the previous theorem implies that certain families of
Steiner processes are indeed idempotent.
\begin{corollary}Let $v_1, \ldots, v_m$ be unit directions in $\RR^n$,
and let $\alpha$ be a sequence of directions, each of whose entries is taken from among the $v_i$,
and in which each of the $v_i$ occurs infinitely often.  

The map ${\rm s}_\alpha:  \KK_n \rightarrow \KK_n$ given by $K \mapsto {\rm s}_\alpha K$ 
is well-defined and idempotent.
\label{idem}
\end{corollary}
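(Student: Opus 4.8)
The plan is to deduce both assertions directly from Theorem~\ref{findir}, which already does all of the substantive work; the corollary itself requires only a short argument. Well-definedness is immediate: Theorem~\ref{findir} guarantees that the limit $L = {\rm s}_\alpha K$ exists for every $K \in \KK_n$, so $K \mapsto {\rm s}_\alpha K$ is a genuine map on $\KK_n$.

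For idempotency, the key observation is that the limiting body is already fixed by each of the symmetrizations appearing in the process. First I would invoke Theorem~\ref{findir} once more: because every $v_i$ occurs infinitely often in $\alpha$ by hypothesis, the limit $L = {\rm s}_\alpha K$ is symmetric under reflection across $v_i^\perp$ for each $i$. By the basic characterization recalled at the start of Section~2, symmetry across $v_i^\perp$ is equivalent to the identity ${\rm s}_{v_i} L = L$.

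Next I would apply the process $\alpha$ to $L$. Since every direction $u_i$ appearing in $\alpha$ is one of the $v_i$, and each ${\rm s}_{v_i}$ fixes $L$, every finite partial symmetral satisfies ${\rm s}_{u_i} \cdots {\rm s}_{u_1} L = L$. Thus the defining sequence for ${\rm s}_\alpha L$ is the constant sequence $L, L, L, \ldots$, which trivially converges to $L$. Hence ${\rm s}_\alpha L = L$, that is, ${\rm s}_\alpha {\rm s}_\alpha K = {\rm s}_\alpha K$, establishing idempotency.

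I expect essentially no obstacle here: the entire difficulty has been absorbed into Theorem~\ref{findir}, whose conclusion that the limit is symmetric in every recurring direction is precisely what makes that limit a fixed point of the process. The only point requiring a moment of care is to confirm that applying $\alpha$ to an already-symmetric body leaves it unchanged at every finite stage, but this is immediate from the fact that each individual ${\rm s}_{v_i}$ acts as the identity on a body it already fixes.
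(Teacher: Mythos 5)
Your proof is correct and follows essentially the same route as the paper: well-definedness is read off from Theorem~\ref{findir}, and idempotency follows because the limit is symmetric across each $v_i^\perp$, hence fixed by each ${\rm s}_{v_i}$, so the process applied to it is the constant sequence. The paper's own proof is just a more compressed version of the same argument.
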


Note that {\em every} direction in $\alpha$ must repeat infinitely often in the sequence to guarantee idempotence.

\begin{proof} It is an immediate consequence of Theorem~\ref{findir} that the map 
$K \mapsto {\rm s}_\alpha K$ is well-defined.  Since each ${\rm s}_\alpha K$ is symmetric under reflection
across each subspace $v_i^\perp$, it follows that  ${\rm s}_{v_i} {\rm s}_\alpha K = {\rm s}_\alpha K$ for each $i$,
so that ${\rm s}_\alpha {\rm s}_\alpha K = {\rm s}_\alpha K$.
\end{proof}

It follows from Theorem~\ref{findir} that {\em periodic} Steiner processes always converge to bodies
that are symmetric under the subgroup of $O(n)$ generated by reflections through a given repeated set of directions
$\{v_1, \ldots, v_m\}$.  More precisely, we have the following.
\begin{corollary}  Let $v_1, \ldots, v_m$ be unit directions in $\RR^n$,
and let $\alpha$ be the periodic sequence of directions given by 
\begin{align}
\alpha = \{\underbrace{v_1, \ldots, v_m}, \underbrace{v_1, \ldots, v_m}, \cdots \}.
\label{perseq}
\end{align}
Then the limit of ${\rm s}_{\alpha} K$ exists for every $K \in \KK_n$, and this limit
is symmetric under reflection across each subspace $v_i^\perp$, so that the Steiner process ${\rm s}_\alpha$
is idempotent.
\label{per}
\end{corollary}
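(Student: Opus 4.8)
The plan is to obtain this as a direct specialization of Theorem~\ref{findir} and Corollary~\ref{idem}, since the periodic sequence~(\ref{perseq}) is a particular instance of a sequence drawn from a finite set of directions. First I would observe that every entry of $\alpha$ lies in the finite list $\{v_1, \ldots, v_m\}$, and that each $v_i$ occurs exactly once per period; since the block $\{v_1, \ldots, v_m\}$ repeats forever, each direction $v_i$ therefore appears infinitely often in $\alpha$. This is exactly the hypothesis demanded by both of the earlier results.

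Given this observation, I would invoke Theorem~\ref{findir} to conclude that ${\rm s}_\alpha K$ converges to a limit $L \in \KK_n$ for every $K \in \KK_n$, and that $L$ is symmetric under reflection in each direction occurring infinitely often. Because every $v_i$ recurs infinitely often, $L$ is symmetric under reflection across each subspace $v_i^\perp$. Idempotence then follows from Corollary~\ref{idem}, whose hypothesis is met by the same observation; alternatively, it can be seen directly, since ${\rm s}_{v_i} {\rm s}_\alpha K = {\rm s}_\alpha K$ for each $i$ means that applying one full period $v_1, \ldots, v_m$ (and hence the entire process) to ${\rm s}_\alpha K$ leaves it fixed.

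Since the argument merely specializes results already in hand, I anticipate no genuine obstacle. The single point needing care is the verification that periodicity forces each $v_i$ to recur infinitely often, which is precisely the feature licensing the appeal to Theorem~\ref{findir} and Corollary~\ref{idem}. Should one also wish to recover the group-theoretic symmetry mentioned in the surrounding discussion, it suffices to note that a body fixed by each reflection across $v_i^\perp$ is automatically fixed by the subgroup of $O(n)$ generated by those reflections, since invariance under each generator yields invariance under every product of generators.
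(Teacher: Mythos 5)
Your proposal is correct and matches the paper's treatment: the corollary is stated there as an immediate consequence of Theorem~\ref{findir} and Corollary~\ref{idem}, justified exactly by the observation that periodicity forces each $v_i$ to appear infinitely often. Nothing further is needed.
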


A basis for $\RR^n$ is said to be {\em irrational} if the angles between any two vectors in the basis 
are irrational multiples of $\pi$.  The set of reflections across the coordinate planes of an irrational
basis generate a dense subgroup of $O(n)$.  Consequently, if a compact convex set $K$ is symmetric under
reflections across all of the directions from an irrational basis, then $K$ must be symmetric under {\em all}
reflections through the origin, so that $K$ must be a Euclidean ball, centered at the origin.

Applying the previous results to an irrational basis of directions leads to the 
following generalization of a periodic construction described in \cite[p. 98]{Egg}. 
\begin{corollary}  
Let $v_1, \ldots, v_m$ be a set of unit directions in $\RR^n$ that contains an irrational basis for $\RR^n$.
Suppose that $\alpha = \{u_1, u_2, \ldots \}$ is a sequence of unit vectors such that each $u_i$ is chosen
from the list of permitted directions $\{v_1, \ldots, v_m\}$, and such that each element of the irrational
basis appears infinitely often in the sequence $\alpha$.  Then the 
limit of ${\rm s}_{\alpha} K$ exists and is a Euclidean ball for every $K \in \KK_n$.
\label{irrat}
\end{corollary}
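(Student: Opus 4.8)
The plan is to read off both conclusions directly from the apparatus already assembled: Theorem~\ref{findir} supplies convergence together with a symmetry of the limit, and the density remark stated just before the corollary upgrades that symmetry to the statement that the limit is a ball. The corollary is thus a matter of feeding the hypotheses correctly into these two inputs. To begin, I would apply Theorem~\ref{findir}: since every entry of $\alpha$ is drawn from the finite list $\{v_1, \ldots, v_m\}$, the sequence ${\rm s}_\alpha K$ converges to a limit $L \in \KK_n$, and $L$ is symmetric under reflection across $v_i^\perp$ for each $v_i$ that occurs infinitely often. Writing $w_1, \ldots, w_n$ for the irrational basis contained in $\{v_1, \ldots, v_m\}$, the hypothesis that each $w_j$ appears infinitely often in $\alpha$ forces $L$ to be symmetric under reflection across every hyperplane $w_j^\perp$; any remaining directions $v_i$ outside the basis merely contribute extra symmetries and may be ignored.

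Next I would invoke the density remark preceding the statement: the reflections across the hyperplanes $w_j^\perp$ of an irrational basis generate a subgroup $G$ that is dense in $O(n)$. Since $L$ is fixed by each generating reflection, it is fixed by all of $G$. As the orbit map $g \mapsto gL$ is continuous from $O(n)$ into $\KK_n$ in the Hausdorff topology, the stabilizer $\{g \in O(n) : gL = L\}$ is a closed subgroup of $O(n)$ containing the dense set $G$, and hence equals $O(n)$. Therefore $L$ is invariant under every orthogonal transformation, which forces $L$ to be a Euclidean ball centered at the origin.

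There is essentially no obstacle beyond matching the hypotheses to Theorem~\ref{findir} and to the density remark. The single point meriting care is the passage from invariance under the finitely many generating reflections to invariance under all of $O(n)$, which rests on density together with continuity of the $O(n)$-action; this is exactly the content recorded in the paragraph before the corollary and may be taken as given, so the argument closes at once.
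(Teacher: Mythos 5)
Your proof is correct and takes essentially the same route as the paper, which leaves the argument implicit: it relies on Theorem~\ref{findir} for convergence and the symmetry of the limit, and on the remark in the preceding paragraph that reflections in an irrational basis generate a dense subgroup of $O(n)$, so the limit must be a ball. Your added observation that the stabilizer is closed (via continuity of the $O(n)$-action) is exactly the detail the paper's remark takes for granted.
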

In particular, if a periodic sequence of the form~(\ref{perseq}) contains an irrational basis for $\RR^n$,
then ${\rm s}_{\alpha} K$ is a Euclidean ball for every $K \in \KK_n$.
For a generalization of this special case to arbitrary compact sets, see also 
\cite{burch-fort}.

\section{Open questions}

\noindent
{\em 1. Rate of convergence}\\

While Theorem~\ref{findir} guarantees convergence of infinite Steiner processes 
using a finite set of distinct directions,
there remain questions about the rate of convergence for different distributions of the permitted
set of directions.  For example, given three normal vectors $u,v,w$ 
to the edges of an equilateral triangle in $\RR^2$
and various choices of $\alpha$ such as 
\begin{align*}
\alpha & = \{\underbrace{u,v,w},
\underbrace{u,v,w}, \ldots\}, \\
\alpha & = \{\underbrace{u,v,w},v,
\underbrace{u,v,w,u,v,w},v,
\underbrace{u,v,w,u,v,w,u,v,w},v, \ldots\},\\
\alpha & = \{ u,v,w,
\underbrace{u,v,u,v},w,
\underbrace{u,v,u,v,u,v},w, \ldots\},
\end{align*}
how does the rate of convergence of ${\rm s}_\alpha K$ vary?  If instead $\alpha$ is determined by a sequence of
random choices from the set $\{u,v,w\}$, how is the rate of convergence 
related to the probability distribution for the choices of directions?\\

\noindent
{\em 2. More general classes of sets}\\

For most theorems regarding Steiner processes on convex bodies it is natural to ask whether similar results hold 
when the initial convex body is replaced by a more general kind of set, such as an arbitrary compact set in $\RR^n$
(see, for example, \cite{burch,burch-fort,vansch1,vansch2,volcic-symm}).
While the proof of Theorem~\ref{findir} above makes use of certain constructions that rely on convexity (such as
mixed volumes, and the equality condition for the Brunn-Minkowski inequality), one can still ask whether 
Theorem~\ref{findir} can be generalized  to Steiner processes on 
arbitrary compact sets in $\RR^n$.   
In \cite{burch-fort} Burchard and Fortier show that this is the case when
the finite set of repeated directions contains an irrational basis (as in Corollary~\ref{irrat}).
What happens if instead the finite set of directions generates a finite subgroup of reflections?  \\

%\newpage
\noindent
{\em 3. Cases of non-convergence}\\

There also remain many questions about the cases in which Steiner processes fail to converge.
In~\cite{BKLYZ} a convex body $K$ and a sequence
of directions $u_i$ are described for which the sequence of Steiner symmetrals
\begin{align*}
K_i = {\rm s}_{u_i} \cdots {\rm s}_{u_1} K
\end{align*}
fails to converge
in the Hausdorff topology. (For more such examples, see also \cite{burch-fort}.) 
More recently \cite{Bianchi-shape} it has been shown that
such examples converge in {\em shape}: there is a corresponding sequence 
of isometries $\psi_i$ such that
the sequence $\{ \psi_i K_i \}$ converges. 
However, many related questions remain open.  How does this limiting shape
depend on the initial body $K$ and the sequence $\alpha$ of 
symmetrizing directions?
What happens if $K$ is permitted to be an arbitrary
(possibly non-convex) compact set?

% \bibliographystyle{amsplain}
% \bibliography{all}

\begin{thebibliography}{10}


\bibitem{Bianchi-shape}
G.~Bianchi, A.~Burchard, P.~Gronchi, and A.~Vol\v{c}i\v{c}, private communication.


\bibitem{bia-gro}
G.~Bianchi and P.~Gronchi, {Steiner symmetrals and their distance from a
  ball}, Israel J. Math. {135} (2003), 181--192.

\bibitem{BKLYZ}
G.~Bianchi, D.~Klain, E.~Lutwak, D.~Yang, and G.~Zhang, {A countable set
  of directions is sufficient for {S}teiner symmetrization}, Adv. in Appl.
  Math. {47} (2011), 869--873. 

\bibitem{Bonn2}
T.~Bonnesen and W.~Fenchel, {Theory of {C}onvex {B}odies}, BCS Associates,
  Moscow, Idaho, 1987.

\bibitem{BLM}
J.~Bourgain, J.~Lindenstrauss, and V.~D. Milman, {Estimates related to
  {S}teiner symmetrizations}, Geometric aspects of functional analysis
  (1987-88), {L}ecture {N}otes in {M}ath., vol. 1376, Springer, Berlin, 1989,
  pp.~264--273.

\bibitem{burch}
A.~Burchard, {Steiner symmetrization is continuous in ${W}^{1,p}$}, Geom.
  Funct. Anal. {7} (1997), 823--860.

\bibitem{burch-fort}
A.~Burchard and M.~Fortier, {Convergence of random polarizations},
  arXiv:1104.4103v1 [math.FA] (2011).

\bibitem{CCF}
A.~Cianchi, M.~Chleb\'{i}k, and N.~Fusco, {The perimeter inequality under
  {S}teiner symmetrization: cases of equality}, Ann. of Math. (2) {162}
  (2005), 525--555.

\bibitem{CF1}
A.~Cianchi and N.~Fusco, {Strict monotonicity of functionals under
  {S}teiner symmetrization}, Calculus of variations: {T}opics from the
  mathematical heritage of {E}. {D}e {G}iorgi, Quad. Mat., vol.~14, Dept.
  Math., Seconda Univ. Napoli, Caserta, 2004, pp.~187--220.

\bibitem{CF2}
A.~Cianchi and N.~Fusco, {Steiner symmetric extremals in {P}\'{o}lya-{S}zeg\"{o} type
  inequalities}, Adv. Math. {203} (2006), 673--728.

\bibitem{Egg}
H.~Eggleston, {{C}onvexity}, Cambridge University Press, New York, 1958.

\bibitem{Falconer}
K.~J. Falconer, {A result on the {S}teiner symmetrization of a compact
  set}, J. London Math. Soc. (2) {14} (1976), 385--386.

\bibitem{GardX}
R.~J. Gardner, {Symmetrals and {X}-rays of planar convex bodies}, Arch.
  Math. (Basel) {41} (1983), 183--189.

\bibitem{Gard-BM}
R.~J. Gardner, {The {B}runn-{M}inkowski inequality}, Bull. Amer. Math. Soc.
  {39} (2002), 355--405.

\bibitem{Gard2006}
R.~J. Gardner, {Geometric {T}omography (2nd {E}d.)}, Cambridge University Press,
  New York, 2006.

\bibitem{Gru-book}
P.~Gruber, {{C}onvex and {D}iscrete {G}eometry}, Springer Verlag, New
  York, 2007.

\bibitem{Hab-Sch}
C.~Haberl and F.~Schuster, {General ${L}_p$ affine isoperimetric
  inequalities}, J. Differential Geom. {83} (2009),  1--26.

\bibitem{KM1}
B.~Klartag and V.~Milman, {Isomorphic {S}teiner symmetrization}, Invent
  Math. {153} (2003), 463--485.

\bibitem{KM2}
B.~Klartag and V.~Milman, {Rapid {S}teiner symmetrization of most of a convex body and the
  slicing problem}, Combin. Probab. Comput. {14} (2005), 
  829--843.

\bibitem{Long}
M.~Longinetti, {An isoperimetric inequality for convex polygons and convex
  sets with the same symmetrals}, Geom. Dedicata {20} (1986), 
  27--41.

\bibitem{LYZ-Orl-proj}
E.~Lutwak, D.~Yang, and G.~Zhang, {Orlicz projection bodies}, Adv. Math.
  {223} (2010), 220--242.

\bibitem{Mani}
P.~Mani-Levitska, {Random {S}teiner symmetrizations}, Studia Sci. Math
  Hungar. {21} (1986), 373--378.

\bibitem{McNabb}
A.~McNabb, {Partial {S}teiner symmetrization and some conduction
  problems}, J. Math. Anal. Appl. {17} (1967), 221--227.

\bibitem{red}
R.~Schneider, {Convex {B}odies: {T}he {B}runn-{M}inkowski {T}heory},
  Cambridge University Press, New York, 1993.

\bibitem{Scott}
P.~R. Scott, {Planar rectangular sets and {S}teiner symmetrization}, Elem.
  Math. {53} (1998), 36--39.

\bibitem{Steiner}
J.~Steiner, {Einfacher {B}eweis der isoperimetrische {H}aupts\"{a}tze}, J.
  Reine Angew. Math. {18} (1838), 281--296.

\bibitem{Talenti}
G.~Talenti, {The standard isoperimetric theorem}, Handbook of convex
  geometry (P.~Gruber and J.~M. Wills, eds.), North-Holland, Amsterdam, 1993,
  pp.~73--124.

\bibitem{vansch1}
J.~{Van Schaftingen}, {Universal approximation of symmetrizations by
  polarizations}, Proc. Amer. Math. Soc. {134} (2005), 177--186.

\bibitem{vansch2}
J.~{Van Schaftingen}, {Approximation of symmetrizations and symmetry of critical
  points}, Topol. Methods Nonlinear Anal. {28} (2006), 61--85.

\bibitem{volcic-symm}
A.~Vol\v{c}i\v{c}, Random Steiner symmetrizations of sets and functions, preprint.

\bibitem{Webster}
R.~Webster, {Convexity}, Oxford University Press, New York, 1994.



\end{thebibliography}

\providecommand{\bysame}{\leavevmode\hbox to3em{\hrulefill}\thinspace}
\providecommand{\MR}{\relax\ifhmode\unskip\space\fi MR }
% \MRhref is called by the amsart/book/proc definition of \MR.
\providecommand{\MRhref}[2]{%
  \href{http://www.ams.org/mathscinet-getitem?mr=#1}{#2}
}
\providecommand{\href}[2]{#2}

\end{document}